\newcommand{\vertiii}[1]{{\left\vert\kern-0.25ex\left\vert\kern-0.25ex\left\vert #1 
    \right\vert\kern-0.25ex\right\vert\kern-0.25ex\right\vert}}
\theoremstyle{plain}
\newtheorem{teorema}{Theorem}[section]
\newtheorem{proposizione}[teorema]{Proposition}
\newtheorem{lemma}[teorema]{Lemma}
\newtheorem*{theorem*}{Theorem}
\theoremstyle{definition}
\newtheorem{definizione}{Definition}[section]
\theoremstyle{remark}
\newtheorem{osservazione}{Remark}[section]
\newcommand{\cc}{\text{Carnot-Carath\'eodory }}
\newcommand{\N}{\mathbb{N}}
\newcommand{\R}{\mathbb{R}}
\newcommand{\res}
\newcounter{const}
\newcounter{eps}
\DeclareMathOperator*{\dist}{dist}
\title{\normalfont\spacedallcaps{
Unextendable intrinsic Lipschitz curves}} 
\author{\spacedlowsmallcaps{Gioacchino Antonelli\textsuperscript{*} and Andrea Merlo\textsuperscript{**}}}
\date{}
\begin{document}

\renewcommand{\sectionmark}[1]{\markright{\spacedlowsmallcaps{#1}}} 
\lehead{\mbox{\llap{\small\thepage\kern1em\color{halfgray} \vline}\color{halfgray}\hspace{0.5em}\rightmark\hfil}} 
\pagestyle{scrheadings}
\maketitle 
\setcounter{tocdepth}{2}
\paragraph*{Abstract}
In the setting of Carnot groups, we exhibit examples of intrinisc Lipschitz curves of positive $\mathcal{H}^1$-measure that intersect every connected intrinsic Lipschitz curve in a $\mathcal{H}^1$-negligible set. As a consequence such curves cannot be extended to connected intrinsic Lipschitz curves. 

The examples are constructed in the Engel group and in the free Carnot group of step 3 and rank 2. While the failure of the  Lipschitz extension property was already known for some pairs of Carnot groups, ours is the first example of the analogous phenomenon for intrinsic Lipschitz graphs. This is in sharp contrast with the Euclidean case.

{\let\thefootnote\relax\footnotetext{* \textit{Scuola Normale Superiore, Piazza dei Cavalieri, 7, 56126 Pisa, Italy,}}}
{\let\thefootnote\relax\footnotetext{** \textit{Université Paris-Saclay, 307 Rue Michel Magat Bâtiment, 91400 Orsay, France.}}}

\paragraph*{Keywords} 
Lipschitz extension property, Carnot groups, Engel groups, free Carnot groups, intrinisc Lipschitz graphs

\paragraph*{MSC (2010)} 53C17, 22E25, 28A75, 49Q15, 26A16.


\section{Introduction}
Extending Lipschitz maps is a fundamental tool in Geometric Measure Theory. We say that the pair of metric spaces $(X,Y)$ has the {\em Lipschitz extension property}, LEP from now on, if for any Lipschitz map $f:A\subset X\to Y$ defined on a subset $A$ of $X$, there exists another Lipschitz map $\widetilde f:X\to Y$ such that $\widetilde f|_A=f$. It is worth recalling that the LEP holds true in case $X$ and $Y$ are Hilbert spaces and in the case in which at least one between $X$ and $Y$ is a finite dimensional Banach space: if the target $Y$ is finite dimensional, then the LEP follows from the standard Mc Shane's extension theorem. If on the other hand is the domain $X$ to be finite dimensional, then the LEP for the couple $(X,Y)$ follows from \cite[Theorem 2]{MR852474}.

The study of Geometric Measure Theory on Carnot groups, initiated in the seminal works \cite{AK00, Serapioni2001RectifiabilityGroup}, has brought the attention to the investigation of LEP for pairs of Carnot groups. Carnot groups are a natural generalization of Euclidean spaces. Indeed, (quotients of) Carnot groups arise both as the infinitesimal models of subRiemannian manifolds and as asmyptotic models of Riemannian Lie groups. For basic definitions we refer the interested reader to \cite{LD17}. We stress that there is not a general way to understand whether an arbitrary couple of Carnot groups has the LEP. However, some specific results are known, see \cite{BaloghFassler09, Magnani10, WengerYoung10}.

Despite the many analogies with Euclidean spaces, Carnot groups also have many insidious features. For instance, in sharp contrast with the Euclidean spaces, they are purely unrectifiable with respect to Lipschitz surfaces of dimension bigger than the dimension of the first layer of their Lie algebra, see \cite{MagnaniUnrect}. Therefore, in order to find a good substitute of the class of Lipschitz graphs in the realm of Carnot groups, Franchi, Serapioni and Serra Cassano introduced the notion of intrinsically Lipschitz graph, see \cite{FSSC06}. 
The idea of the construction of these surfaces is to say that a graph of a function between two homogeneous complementary subgroups of a Carnot group is intrinsically Lipschitz whenever it satisfies a uniform cone-like property at every point, where the cones are intrinsic, cf. \cref{def:IntrinsicLipschitzGraph}. We refer to
\cite{FranchiSerapioni16} for a wide study of the notion of intrinsically Lipschitz functions and graphs. 

We stress that the study of intrinsically Lipschitz graphs in Carnot groups is a very active area of research today, with several important contributions. See, e.g., \cite{ChousionisFasslerOrponen19} for relations of this notion with quantitative rectifiability, \cite{Vittone20} for a proof of Rademacher theorem for co-horizontal intriniscally Lipschitz graphs in the Heisenberg groups, \cite{Foliated20} for a deep study of structural properties of intriniscally Lipschitz graphs in the first Heisenberg group $\mathbb H^1$, and \cite{DLDMV19} for rectifiability results with intrinsically Lipschitz graphs.

Questions about LEP can be proposed also for intriniscally Lipschitz graphs. For example, is it true that every intrinsically Lipschitz map $\varphi:U\subseteq \mathbb W\to\mathbb V$, where $\mathbb W,\mathbb V$ are complementary subgroups of a Carnot group, can be extended to an \textbf{entire} intrinsically Lipschitz map $\widetilde\varphi:\mathbb W\to\mathbb V$? The answer is positive when the subgroup $\mathbb V$ is \textbf{horizontal}, i.e., contained in the first layer of the stratification of the Carnot group, cf. \cite[Theorem 1.5]{Vittone20}, and \cite[Theorem 4.25]{FSSC11}.

Hence, we have the validity of the LEP for low-codimensional intriniscally Lipschitz graphs in arbitrary Carnot groups. Moreover, in the recent \cite{KatrinDaniela}, the authors prove that every $\varphi:U\subseteq \mathbb W\to\mathbb V$, where $\mathbb W,\mathbb V$ are complementary subgroups of the $n$-th Heisenberg group $\mathbb H^n$, and $\mathbb W$ is horizontal, can be extended to an entire intrinsically Lipschitz map $\widetilde\varphi:\mathbb W\to\mathbb V$, cf. \cite[Theorem 1.2]{KatrinDaniela}.

In this note we show that the previous example is special. Namely, we provide a negative answer to the validity of the LEP for intrinsically Lipschitz maps defined on subsets of horizontal subgroups of a Carnot group. This is the first example in which the LEP of intrinisc Lipschitz graphs is known to fail on Carnot groups. 

We recall that with $\mathbb F_{2,3}$ we denote the free Carnot group of rank 2 and step 3, and with $V_1$ we denote its horizontal layer, see \cref{sec:Preliminaries}. Up to a choice of an adapted basis $\mathscr{B}:=\{X_1,X_2,X_3,X_4,X_5\}$ of the Lie algebra as in \cref{sec:Preliminaries}, we identify $\mathbb F_{2,3}$ with $\mathbb R^5$ through the exponential map. We endow the Lie algebra of $\mathbb F_{2,3}$ with an auxiliary inner product that makes $\mathscr{B}$ an orthonormal basis, and we fix an arbitrary left-invariant homogeneous distance on $\mathbb F_{2,3}$, see \cref{sec:Preliminaries} for more details. The Hausdorff measures on $\mathbb F_{2,3}$ are computed with respect to such a distance. Finally, for every $e\in V_1$ we denote $\mathfrak{N}(e):=\{\exp(te):t\in\mathbb R\}$, and $\mathbb V(e):=\exp(e^\perp)$. 

We are now ready to state our result. For the proof of the forthcoming statement, see \cref{prop.intersez.curves}, and \cref{thm:EveryDirection}.

\begin{teorema}\label{thm:EveryDirectionINTRO}
Let $\mathbb F_{2,3}$ be the free Carnot group of rank 2 and step 3, and let $V_1$ be the first layer of a stratification of its Lie algebra. For any $e\in V_1$ there exists a compact set $K\subseteq \mathfrak{N}(e)$, and an intriniscally Lipschitz function $\varphi:K\to \mathbb V(e)$ such that the following two conditions hold.
\begin{itemize}
    \item[(i)] $\mathcal{H}^1(\mathrm{graph}(\varphi))>0$, where $\mathrm{graph}(\varphi):=\{a\cdot\varphi(a):a\in K\}$,
    \item[(ii)] for any intrinsically Lipschitz map $\widetilde\varphi:\Omega\to \mathbb{V}(e)$, where $\Omega$ is an open subset of $\mathfrak{N}(e)$, we have
    $$
    \mathcal{H}^1(\mathrm{graph}(\varphi)\cap \mathrm{graph}(\widetilde\varphi))=0.
    $$
\end{itemize}
As a consequence, there exists no intrinsically Lipschitz map $\psi:\mathfrak{N}(e)\to\mathbb V(e)$ such that $\psi|_K=\varphi$.
\end{teorema}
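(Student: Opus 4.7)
The strategy begins with a symmetry reduction. Because $\mathbb F_{2,3}$ is free of step $3$ on $V_1$, every linear automorphism of $V_1$ extends uniquely to a graded Lie algebra isomorphism and hence to a homogeneous group automorphism of $\mathbb F_{2,3}$. Such an automorphism sends $\mathfrak N(e_0)$ to $\mathfrak N(e)$, $\mathbb V(e_0)$ to $\mathbb V(e)$, and preserves the class of intrinsic Lipschitz graphs up to a change of Lipschitz constant. It should therefore suffice to produce the pair $(K,\varphi)$ for a single direction, say $e_0 = X_1$; this is presumably the role of \cref{prop.intersez.curves}, while \cref{thm:EveryDirection} performs the automorphism transport to general $e \in V_1$.

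For the explicit example I would take $K \subseteq \mathfrak N(X_1)$ to be a fat Cantor set of positive $\mathcal H^1$ measure, obtained by removing middle subintervals of lengths $\delta_n$ at stage $n$. The function $\varphi$ is built by a multi-scale construction in which, at each scale $n$, the value $\varphi(a)$ receives a small shift in the $X_4$ (equivalently $X_5$) direction of size $\asymp \delta_n^3$. Since the grading weight of $V_3$ is $3$, shifts of that size are invisible to the intrinsic cone condition at scale $\delta_n$, and a telescoping estimate across scales yields a uniform intrinsic Lipschitz constant. The encoded $X_4$-profile then carries a non-trivial Cantor-type measure, which, together with the biLipschitz parametrization of the graph by $K$, forces $\mathcal H^1(\mathrm{graph}(\varphi))>0$.

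To verify (ii), the decisive point is that a connected intrinsic Lipschitz graph $\mathrm{graph}(\widetilde\varphi)$ over an open subset of $\mathfrak N(X_1)$ is the image of a Lipschitz horizontal curve. Writing such a curve in exponential coordinates as $\gamma(t) = (t, y_2(t), y_3(t), y_4(t), y_5(t))$, horizontality forces $y_3, y_4, y_5$ to be iterated integrals in $t$ of the single Lipschitz function $y_2$. In particular $y_4$ is absolutely continuous in $t$ with derivative a uniformly bounded expression in $y_2$, so the $X_4$-profile of any horizontal curve is a smooth integral object. The design of $\varphi$ is meant to make its $X_4$-values on $K$ incompatible with any such integral identity at $\mathcal H^1$-a.e.\ point of $K$. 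Comparing, at scale $\delta_n$, the Cantor oscillations of size $\delta_n^3$ with the corresponding increment $\int y_2 \, dt$ of any competitor, and then applying a Lebesgue density argument on $K$, should yield $\mathcal H^1(\mathrm{graph}(\varphi) \cap \mathrm{graph}(\widetilde\varphi))=0$.

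The main obstacle I foresee is balancing two competing requirements on $\varphi$: it must be rough enough that its $X_4$-profile is transverse to \emph{every} horizontal-curve profile in the whole $L^\infty$-parametrized family, and yet tame enough to satisfy a single uniform intrinsic Lipschitz constant on $K$. The step-$3$ nature of $\mathbb F_{2,3}$ is what makes both requirements simultaneously achievable, because $V_3$ offers room for oscillations of size $\delta^3$ that escape the cone condition but are detected by the horizontality constraint through integration against a Lipschitz $y_2$. A further technical point is the identification of connected intrinsic Lipschitz graphs over a one-dimensional horizontal subgroup with horizontal Lipschitz curves; while folklore, it needs to be set up carefully in the higher-step setting by combining the intrinsic cone property with Pansu-type tangent analysis.
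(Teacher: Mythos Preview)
Your overall architecture matches the paper's: reduce to a single direction via the automorphism group of $\mathbb F_{2,3}$ (this is \cref{pautomorf} and \cref{thm:EveryDirection}), build a fat Cantor set $K$ and an intrinsic Lipschitz $\varphi$ by multi-scale shifts in $V_3$ (this is \cref{curvunr}), and identify the graph of an intrinsic Lipschitz map on an interval with a Lipschitz horizontal curve whose Pansu derivative lies in a one-sided cone (the discussion after \cref{def:IntrinsicLipschitzGraph}). The paper works in the direction $X_2$ rather than $X_1$, but in $\mathbb F_{2,3}$ this is immaterial.

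Where your proposal diverges, and where there is a genuine gap, is the mechanism for (ii). You frame the obstruction as a regularity mismatch: the $X_4$-profile of $\varphi$ carries ``Cantor oscillations'' while the $X_4$-profile of any horizontal curve is a ``smooth integral object,'' and a Lebesgue density comparison should separate them. But the paper's construction produces a $\gamma_4$ satisfying $\varepsilon_3|\gamma_4(s)-\gamma_4(t)|^{1/3}\le \tfrac{1}{7}|s-t|$ on $K$ (see \eqref{P}); such a function extends to something perfectly regular and is in no sense rougher than an iterated integral of a bounded control. Roughness is not the obstruction, and your density comparison as stated would not distinguish $\gamma$ from a genuine horizontal curve.

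The paper's actual mechanism is \emph{monotonicity}, and it is algebraic rather than measure-theoretic. \cref{l:curves:Snu} shows that a full $C(X_2,\sigma)$-curve starting at $0$ stays in the closure of the semigroup $X(X_2,\sigma)$; the explicit description of that semigroup (imported from \cite{bellettini2019sets} in \cref{propconoXcurv}) forces, for the relative coordinates $x=\gamma(t_0)^{-1}\gamma(t_1)$, the inequality
\[
x_2^3x_4-2x_2^2x_3^2-6x_2x_3x_5-6x_5^2\ge 0.
\]
Since the constructed $\gamma$ has $x_3=x_5=0$ and $x_2=t_1-t_0>0$, this collapses to $x_4\ge 0$, i.e.\ $\gamma_4(t_1)\ge\gamma_4(t_0)$. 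The entire point of the construction in \cref{curvunr} is to make $\gamma_4$ \emph{strictly decreasing} on $K$. Hence any full $C(X_2,\sigma)$-curve meets $\mathrm{Im}(\gamma)$ in \emph{at most one point} (\cref{unrunr}), which is far stronger than $\mathcal H^1$-null intersection and needs no density argument. The idea you are missing is this sign constraint on the $V_3$-coordinate coming from the closed semigroup, together with the matching design choice that $\gamma_4$ be strictly monotone in the opposite sense.
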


Notice that \cref{thm:EveryDirectionINTRO} tells that the examples for which LEP fails can be constructed for every horizontal direction in $\mathbb F_{2,3}$. We stress that the previous example can also be constructed in the Engel group taking as $e$ the unique horizontal abnormal direction, see \cref{rem:Engel}. 

We highlight a connection between the LEP for low-dimensional intrinsically Lipschitz graphs in Carnot groups and the LEP for pairs of Carnot groups, which has already been noticed in \cite{KatrinDaniela}. It is readily seen that whenever $\varphi:U\subseteq \mathbb W\to\mathbb V$ is an intriniscally Lipschitz function, where $\mathbb W,\mathbb V$ are complementary subgropus of a Carnot group $\mathbb G$, and $\mathbb V$ is normal, hence the graph map $\mathrm{graph}(\varphi):U\to\mathbb G$ is Lipschitz, see \cite[Proposition 3.7]{FranchiSerapioni16}, and $\mathbb W$ is a Carnot subgroup, see \cite[Remark 2.1]{AM20}. Hence asking for the validity of LEP for intrinsically Lipschitz maps between complementary subgroups $\mathbb W$ and $\mathbb V$, with $\mathbb V$ normal, amounts at asking if the couple of Carnot groups $(\mathbb W,\mathbb G)$ has a LEP in such a way that the extension preserves the graph structure associated to $(\mathbb W,\mathbb V)$.

We briefly discuss the proof of \cref{thm:EveryDirectionINTRO}. The first step is to prove that if a Lipschitz curve starting from $0$ and defined on a compact interval of $\mathbb R$ with values in a Carnot group has intrinsic derivative that stays at every time in some cone, then the entire curve lies in the closure of the semigroup generated by that cone, see \cref{l:curves:Snu}. Hence, we exploit the explicit expression of the semigroup computed in \cite[Proposition 5.7]{bellettini2019sets} for $\mathbb F_{2,3}$ in order to give a geometric constraint on every Lipschitz curve in $\mathbb F_{2,3}$ that has intrinsic derivative that stays at every time in some precise one-sided Euclidean cone, see \cref{propconoXcurv}.

Hence, we carefully construct a biLipschitz curve $\gamma$ from a compact $\mathcal{H}^1$-positive measured set $K\subseteq \mathfrak{N}(X_2)$ to $\mathbb F_{2,3}$
that in coordinates reads as $\gamma(t)=(0,t,0,\gamma_4(t),0)$ with a strictly decreasing $\gamma_4$, see \cref{curvunr}. The fact that $\gamma_4$ is strictly decreasing, together with the explicit expression of the semigroup computed in \cite{bellettini2019sets}, implies that every Lipschitz curve from a compact interval of $\mathfrak{N}(X_2)$ to $\mathbb F_{2,3}$ that has intrinsic derivative that stays at every time in some precise one-sided Euclidean cone could intersect $\gamma$ at most in one point, see \cref{unrunr}. Hence, to obtain \cref{thm:EveryDirectionINTRO} with $e=X_2$, it is sufficient to notice that the construction in \cref{curvunr} ensures that the projection $\mathfrak{N}(X_2)\supset K\ni t \mapsto P_{\mathbb V(X_2)}(\gamma(t))$ is intrinsically Lipschitz, and further notice that the graph of every intrinsically Lipschitz map from a compact interval of $\mathfrak{N}(X_2)$ to $\mathbb V(X_2)$ is a Lipschitz curve that has intrinsic derivative that stays at every time in some one-sided Euclidean cone, see the discussion after \cref{def:IntrinsicLipschitzGraph}.

Finally, to obtain the example of \cref{thm:EveryDirectionINTRO} in every direction $e\in V_1$, we exploit the fact that $\mathbb F_{2,3}$ is a free Nilpotent Lie group, cf. \cref{pautomorf}.

\medskip

Before concluding the introduction let us briefly discuss why this counterexample is so relevant. In \cite{kircharea} one of the core observations that yields both the existence of the density and the area formulae is that given a Lipschitz function $f:B\to X$, where $X$ is an arbitrary metric space and $B$ is a Borel subset of $\R^n$, one can without loss of generality assume that the image under $f$ of the set where the metric differential is injective, is covered $\mathcal{H}^n$-almost all by the image of countably many Lipschitz maps of arbitrary small Lipschitz constant defined on open sets of suitable $n$-dimensional Banach spaces, see \cite[Lemma 4]{kircharea}. This approach is the natural extension to the metric setting of the classical one, see for instance \cite[Lemma 3.2.17]{Federer1996GeometricTheory}.
The counterexample we construct in this note therefore tells us that the area formula, the existence of the density and the general structure theory for intrinsic Lipschitz rectifiable sets cannot be deduced from a theory for the graphs defined on open sets. In particular the delicate techniques employed in \cite{AntonelliMerlo2021, antonelli2020rectifiable} to deal with intrinsic Lipschitz functions defined on Borel sets were an unavoidable evil in order to obtain those results at that level of generality.

\section{Preliminaries}\label{sec:Preliminaries}

For general facts about Carnot groups, we refer the reader to \cite{LD17}. In this note we work specifically in the Carnot group $\mathbb{F}_{2,3}$ that is the free group of step 3 and rank 2. The Lie algebra of $\mathbb{F}_{2,3}$ is 5 dimensional and it is generated by 2 vectors, that we denote by $X_1$ and $X_2$. We will denote $V_1:=\mathrm{span}\{X_1,X_2\}$, and we endow it with a scalar product denoted by $\langle\cdot,\cdot\rangle$ that makes $X_1,X_2$ orthonormal. A basis of the Lie algebra is completed to $X_1,\ldots,X_5$ for which the only non-trivial Lie brackets are
$$
[X_2,X_1]=X_3,\qquad[X_3,X_1]=X_4,\qquad [X_3,X_2]=X_5.
$$
Every graded group, and so $\mathbb F_{2,3}$, has a one-parameter family of dilations that we denote by $\{\delta_\lambda: \lambda >0\}$. 
We will indicate with $\delta_{\lambda}$ both the dilation of factor $\lambda$ on the group and its differential.

The operation of $\mathbb{F}_{2,3}$ in exponential coordinates \textbf{of the second type} (cf. \cite[Section 5]{bellettini2019sets}) is given by the following expression
$$
x\cdot y=\Big(x_1+y_1,x_2+y_2,x_3+y_3-x_1y_2,x_4+y_4-x_1y_3+\frac{x_1^2y_2}{2},x_5+y_5+x_1x_2y_2+\frac{x_1y_2^2}{2}-x_2y_3\Big),
$$
see \cite[Equation (5.1)]{bellettini2019sets} for a reference, and where as usual $x_i$ and $y_i$ are the coordinates of $x$ and $y$ seen as vectors in $\mathbb R^5$.

\begin{definizione}[Smooth-box metric]\label{smoothnorm}
Let $\exp$ be the exponential map of $\mathbb F_{2,3}$. We identify $\mathbb F_{2,3}$ with $\mathbb R^5$ through $\exp$ and the previous choice of the basis $\{X_1,\dots,X_5\}$ of the Lie algebra.

For any $g\in \mathbb{F}_{23}\equiv \mathbb R^5$, we let
$$\lVert (g_1^1,g_1^2,g_2,g_3^1,g_3^2)\rVert=\lVert \exp(g_1^1X_1+g_1^2X_2+g_2X_3+g_3^1X_4+g_3^2X_5)\rVert:=\max\{\varepsilon_1\lvert g_1\rvert,\varepsilon_2\lvert g_2\rvert^{1/2},\varepsilon_3|g_3|^{1/3}\},$$
where $g_1:=(g_1^1,g_1^2)$, $g_3:=(g_3^1,g_3^2)$, and $\varepsilon_1=1,\varepsilon_2,\varepsilon_3$ are suitably small parameters depending only on the group $\mathbb F_{23}$. For the proof of the fact that we can choose the $\varepsilon_i$'s in such a way that $\lVert\cdot\rVert$ is a left invariant, homogeneous norm on $\mathbb F_{23}$ that induces a left-invariant homogeneous distance we refer to \cite[Section 5]{step2}. Furthermore, we define
$$
d(x,y):=\lVert x^{-1}\cdot y\rVert.
$$
Notice that 
\begin{equation}\label{eqn:NormaCoordinate}
\|\exp(x_2X_2)\exp(x_4X_4)\|=\|\exp(x_4X_4)\exp(x_2X_2)\|=\|\exp(x_2X_2+x_4X_4)\|=\max\{\varepsilon_1|x_2|,\varepsilon_3|x_4|^{1/3}\},
\end{equation}
where we are using that $[X_2,X_4]=0$. Hence, the vector $(0,x_2,0,x_4,0)$ defines the same element of $\mathbb F_{23}$ no matter if we read it in coordinates induced by the identification through $\exp$ and the basis $\{X_1,\dots,X_5\}$, or if we read it in exponential coordinates \textbf{of the second type}. Hence, when computing his norm we will freely use \eqref{eqn:NormaCoordinate}.
\end{definizione}

We recall without proof the following lemma which is essentially due to the fact that $\mathbb F_{2,3}$ is a free nilpotent Lie group.
\begin{lemma}\label{pautomorf}
For any couple of linearly independent vectors $Y_1,Y_2$ in the first layer $V_1$ of the Lie algebra of $\mathbb{F}_{2,3}$ there exists an homogeneous group automorphism $\Psi:\mathbb{F}_{2,3}\to\mathbb{F}_{2,3}$ such that
$$\Psi(\mathrm{exp}(X_i))=\mathrm{exp}(Y_i)\qquad\text{for any $i=1,2$}.$$
\end{lemma}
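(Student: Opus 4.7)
The plan is to exploit the universal property of the free nilpotent Lie algebra $\mathfrak{f}_{2,3}$ of step $3$ and rank $2$. Recall that this universal property says the following: given any nilpotent Lie algebra $\mathfrak{g}$ of step at most $3$, any linear map from a $2$-dimensional space onto a subset of $\mathfrak{g}$ extends uniquely to a Lie algebra homomorphism $\mathfrak{f}_{2,3} \to \mathfrak{g}$.

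First I would define the linear map $\psi: V_1 \to V_1$ by $\psi(X_1) := Y_1$ and $\psi(X_2) := Y_2$. Since $Y_1, Y_2$ are linearly independent, $\psi$ is a linear isomorphism of $V_1$. Viewing $V_1 \hookrightarrow \mathfrak{f}_{2,3}$, the universal property applied to the nilpotent Lie algebra $\mathfrak{f}_{2,3}$ itself produces a (necessarily unique) Lie algebra homomorphism $\widetilde{\psi}: \mathfrak{f}_{2,3} \to \mathfrak{f}_{2,3}$ extending $\psi$.

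Next I would check that $\widetilde{\psi}$ is a Lie algebra automorphism and that it is stratified, i.e.\ $\widetilde{\psi}(V_j) \subseteq V_j$ for $j=1,2,3$. Stratification is immediate by construction: $\widetilde{\psi}(V_1) \subseteq V_1$, and since $V_2 = [V_1, V_1]$ and $V_3 = [V_1, V_2]$, and $\widetilde{\psi}$ is a Lie homomorphism, it sends $V_2$ into $V_2$ and $V_3$ into $V_3$. For bijectivity, the restriction of $\widetilde{\psi}$ to $V_1$ is the isomorphism $\psi$; hence $\widetilde{\psi}(V_2) = \widetilde{\psi}([V_1,V_1]) = [\psi(V_1), \psi(V_1)] = [V_1, V_1] = V_2$, and similarly $\widetilde{\psi}(V_3) = V_3$. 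So $\widetilde{\psi}$ is surjective, and by dimension count also injective.

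Finally, since $\widetilde{\psi}$ preserves the stratification, it automatically commutes with the dilations $\delta_\lambda$ on the Lie algebra. Pushing forward to the group level by setting $\Psi := \exp \circ\, \widetilde{\psi} \circ \exp^{-1}: \mathbb{F}_{2,3} \to \mathbb{F}_{2,3}$ yields a Lie group automorphism which commutes with the group dilations, i.e.\ a homogeneous group automorphism. By construction $\Psi(\exp(X_i)) = \exp(\widetilde{\psi}(X_i)) = \exp(Y_i)$ for $i=1,2$, as desired. There is no real obstacle here; the only point demanding a line of attention is the verification that the candidate map is well-defined as a Lie algebra homomorphism, which is precisely what the freeness of $\mathbb{F}_{2,3}$ grants for free.
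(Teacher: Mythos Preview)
Your argument is correct and is precisely the one the paper has in mind: the paper states the lemma \emph{without proof}, remarking only that it ``is essentially due to the fact that $\mathbb{F}_{2,3}$ is a free nilpotent Lie group.'' Your proof via the universal property of $\mathfrak{f}_{2,3}$, together with the observation that a stratification-preserving Lie algebra automorphism lifts through $\exp$ to a homogeneous group automorphism, fills in exactly those omitted details.
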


Let us now give some definitions.
\begin{definizione}[Vector fields in coordinates]\label{campii}
The vector fields $\{X_i\}_{i=1,\ldots,5}$ act on smooth functions, and for $i=1,\ldots,5$ we can write them in exponential coordinates of the second type as
$$
X_i(x):=\sum_{j=1}^5 \mathfrak{c}_j^i(x)\partial_j,
$$
where, for every $j=1,\dots,5$, $\partial_j=\partial_{x_j}$ are the standard derivations on $\mathbb R^5$ associated to the coordinate functions, and where $\mathfrak{c}_j^i(x)$ are polynomials.
We shall order the coefficients $\mathfrak{c}_j^i$ relative to the vector-fields $X_1,X_2$ in the form of the matrix
$$
\mathscr{C}(x):=\begin{pmatrix}
\mathfrak{c}_1^1(x) & \mathfrak{c}^{2}_1(x)\\
\vdots&\vdots\\
\mathfrak{c}^1_5(x)&\mathfrak{c}^{2}_5(x)
\end{pmatrix}.	
$$
\end{definizione}

\begin{definizione}[Derivative of a curve]\label{def:Derivative}
Let $B$ be a Borel subset 
of the real line. Given a curve $\gamma:B\to \mathbb F_{2,3}\equiv \mathbb R^5$ and a Lebesgue density point $t\in B$, we denote
$$
\gamma^\prime(t):=\lim_{\substack{r\to 0 \\ t+r\in B}}\frac{\gamma(t+r)-\gamma(t)}{r},\qquad \text{whenever the right-hand side exists.}
$$

Furthermore, we say that an absolutely continuous curve $\gamma:[a,b]\to \mathbb F_{2,3}$ is \emph{horizontal} if there exists a measurable function $h:[a,b]\to V_1$ such that
\begin{itemize}
    \item[(i)] $\gamma^\prime(t)=\mathscr{C}(\gamma(t))[h(t)]$ for $\mathcal{H}^1$-almost every $t\in [a,b]$,
    \item[(ii)] $\lvert h\rvert\in L^\infty([a,b])$.
\end{itemize}
Following the notation of \cite{tesimonti} we shall refer to $h$ as the \emph{canonical coordinates of }$\gamma$ and if $\lVert h\rVert_\infty\leq 1$ we will say that $\gamma$ is a \emph{sub-unit} path.
Finally, we define the Carnot-Carath\'eodory distance $d_c$ on $\mathbb F_{2,3}$ as
\begin{equation}
d_c(x,y):=\inf\{T\geq 0:\text{ there is a sub-unit path }\gamma:[0,T]\to\R^5\text{ such that }\gamma(0)=x\text{ and }\gamma(1)=y\}.\nonumber
\end{equation}
It is well known that $d_c(\cdot,\cdot)$ is a left invariant homogeneous metric on $\mathbb F_{2,3}$. 
Moreover, $d_c$ on $\mathbb F_{2,3}$ is biLipschitz equivalent to the distance induced by the norm $\|\cdot\|$ introduced in \cref{smoothnorm}, since every two homogeneous left-invariant distances are biLipschitz equivalent on a Carnot group. 
\end{definizione}

\begin{definizione}[Hausdorff Measures]\label{def:HausdorffMEasure}
Throughout the paper we define the $h$-dimensional {\em Hausdorff measure}\label{hausmeas} relative to $d$ as
$$
\mathcal{H}^h(A):=\sup_{\delta>0}\inf \left\{\sum_{j=1}^{\infty} 2^{-h}(\mathrm{diam}E_j)^h:A \subseteq \bigcup_{j=1}^{\infty} E_j,\, \mathrm{diam}E\leq \delta\right\}.
$$
\end{definizione}

We now give two lemmas about Lipschitz curves that will be useful in the proof of the main result of this note.
The following lemma allows us to characterise those Euclidean Lipschitz curves that are also Lipschitz curves when $\R^5$ is endowed with the \cc distance $d_c$ introduced above. Its proof is based on an extension argument and on \cite[Lemma 1.3.3]{tesimonti}. Since the proof is standard, we omit it.

\begin{lemma}\label{lemma.monti1}
Let $B$ be a Borel subset of the real line. If a curve $\gamma:B\to\mathbb F_{2,3}$ is $L$-Lipschitz with respect to the distance $d_c$ on $\mathbb F_{2,3}$, then the Euclidean derivative $\gamma'(t)$, see \cref{def:Derivative}, exists at almost every $t\in B$, and is such that 
$$
\gamma^\prime(t)=\mathscr{C}(\gamma(t))[h(t)]\text{ for }\mathcal{H}^1\text{-almost every }t\in B,
$$
for some $h\in L^\infty(B,V_1)$ with $\lVert h\rVert_\infty\leq L$.
\end{lemma}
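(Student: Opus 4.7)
The plan is to reduce the statement to the analogous fact for Lipschitz curves defined on a genuine interval---which is precisely \cite[Lemma 1.3.3]{tesimonti}---via a Lipschitz extension argument, and then to pull back the horizontal control to $B$ using the Lebesgue density theorem.

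First, I would reduce to bounded $B$: both hypothesis and conclusion are local, so I cover $B$ by countably many bounded Borel pieces and work on each. Fix $B \subseteq [a,b]$. Since $(\mathbb{F}_{2,3}, d_c)$ is a complete metric space, the $L$-Lipschitz map $\gamma$ extends by uniform continuity to an $L$-Lipschitz map on the closure $\overline{B}$. The complement $[a,b] \setminus \overline{B}$ is relatively open, hence an at-most-countable disjoint union of intervals $(p_i, q_i)$ whose endpoints lie in $\overline{B} \cup \{a,b\}$. On each such interval I fill in $\gamma$ by an affine reparametrization of a sub-unit $d_c$-geodesic joining the two boundary values; such geodesics exist by Chow--Rashevskii together with completeness of $d_c$, and they have total length at most $d_c(\gamma(p_i),\gamma(q_i))\le L(q_i-p_i)$. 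The resulting extension $\widetilde\gamma:[a,b]\to\mathbb{F}_{2,3}$ is $L$-Lipschitz with respect to $d_c$.

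Next, I would apply \cite[Lemma 1.3.3]{tesimonti} to $\widetilde\gamma$ to obtain $\widetilde h \in L^\infty([a,b], V_1)$ with $\lVert\widetilde h\rVert_\infty \leq L$ such that the Euclidean derivative satisfies $\widetilde\gamma^\prime(t) = \mathscr{C}(\widetilde\gamma(t))[\widetilde h(t)]$ for a.e.\ $t \in [a,b]$. Setting $h := \widetilde h|_B$ immediately gives the $L^\infty$-bound. To match the restricted derivative of \cref{def:Derivative} with $\widetilde\gamma^\prime$, I invoke the Lebesgue density theorem: almost every $t\in B$ is a point of density one for $B$ and simultaneously a differentiability point of $\widetilde\gamma$; at such $t$, restricting the difference quotient of $\widetilde\gamma$ to increments $r$ with $t+r\in B$ is a cofinal restriction of the two-sided limit, so the restricted limit exists and equals $\widetilde\gamma^\prime(t) = \mathscr{C}(\gamma(t))[h(t)]$.

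I do not anticipate a serious obstacle: the only mildly delicate point is preserving the Lipschitz constant exactly (rather than up to a universal factor) through the extension step, which is precisely why I favor the geodesic-interpolation construction over a generic McShane-type extension into a metric target.
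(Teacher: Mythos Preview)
Your argument is correct and follows precisely the route the paper indicates: extend the $L$-Lipschitz fragment to a full interval (your geodesic-interpolation construction indeed preserves the constant $L$), apply \cite[Lemma 1.3.3]{tesimonti} on the interval, and restrict back using Lebesgue density. This is exactly the ``extension argument based on \cite[Lemma 1.3.3]{tesimonti}'' the paper alludes to before omitting the proof.
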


The following lemma can be proved with an extension argument and basing on \cite[Lemma 2.1.4]{tesimonti}. Again, since the proof is standard, we omit it.
\begin{lemma}\label{lemma.monti2}
Let $B$ be a Borel subset of the real line and assume $\gamma:B\to \mathbb F_{2,3}$ is a Lipschitz curve with respect to the metric $d_c$. If $h\in L^\infty(B,V_1)$ is the vector of canonical coordinates of $\gamma^\prime$, then for $\mathcal{H}^1$-almost every $t\in B$ we have
    $$
    D\gamma(t):=\lim_{\substack{s\to 0\\t+s\in B}}\delta_{1/s}(\gamma(t)^{-1}\cdot\gamma(t+s))=(h_1(t),h_2(t),0,0,0).
    $$
    In particular $D\gamma(t)$ exists for $\mathcal{H}^1$-almost every $t\in B$.
\end{lemma}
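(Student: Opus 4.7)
The plan is to extend $\gamma$ to a Lipschitz curve on a closed interval, invoke the interval-version of the statement already available in the literature (namely \cite[Lemma 2.1.4]{tesimonti}), and then transport the conclusion back to $B$ via a Lebesgue density argument.

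First, I would localize so that $B$ is bounded, set $I:=[\inf B,\sup B]$, and extend $\gamma$ to an $L$-Lipschitz curve $\widetilde\gamma\colon I\to \mathbb{F}_{2,3}$ with respect to $d_c$. Since $\gamma$ is uniformly continuous, it extends to $\overline B$ without increasing the Lipschitz constant; and since $(\mathbb F_{2,3},d_c)$ is geodesic, I can fill each connected component $(a_j,b_j)$ of the open set $I\setminus\overline B$ with a constant-speed $d_c$-geodesic joining $\gamma(a_j)$ to $\gamma(b_j)$. The resulting $\widetilde\gamma$ is globally $L$-Lipschitz. By \cref{lemma.monti1}, it admits canonical coordinates $\widetilde h\in L^\infty(I,V_1)$, and by the uniqueness of such coordinates we have $\widetilde h=h$ at $\mathcal H^1$-a.e.\ point of $B$.

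Next, I would apply \cite[Lemma 2.1.4]{tesimonti} to $\widetilde\gamma$ on the interval $I$, obtaining that for $\mathcal H^1$-a.e.\ $t\in I$
\[
\lim_{s\to 0}\delta_{1/s}\bigl(\widetilde\gamma(t)^{-1}\cdot\widetilde\gamma(t+s)\bigr)=\bigl(\widetilde h_1(t),\widetilde h_2(t),0,0,0\bigr).
\]
Heuristically, $\widetilde\gamma$ is the integral curve of the time-dependent horizontal field $\widetilde h_1(t)X_1+\widetilde h_2(t)X_2$, so to leading order in $s$ the increment on the left is $\exp\bigl(s(\widetilde h_1(t)X_1+\widetilde h_2(t)X_2)\bigr)$; the last three coordinates vanish because the Pansu differential of a Lipschitz curve into a Carnot group must land in the horizontal layer, as higher layers dilate faster than $s$.

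To conclude, let $N\subset I$ be the $\mathcal H^1$-negligible set where either the limit above fails to exist or $\widetilde h\neq h$. At $\mathcal H^1$-a.e.\ $t\in B$, the point $t$ is a density point of $B$ and lies outside $N$; since $\widetilde\gamma|_B=\gamma$, the constrained limit appearing in the statement is the restriction of an already convergent unconstrained limit to a cofinal family of $s$, hence it exists and equals $(h_1(t),h_2(t),0,0,0)$. The only genuinely delicate step in this plan is the construction of the extension $\widetilde\gamma$; the geometric content of the lemma is carried entirely by the interval version from \cite{tesimonti}.
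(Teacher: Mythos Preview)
Your proposal is correct and follows exactly the approach the paper indicates: the paper omits the proof, stating only that it ``can be proved with an extension argument and basing on \cite[Lemma 2.1.4]{tesimonti}'', which is precisely the extend-to-an-interval, apply-the-interval-version, restrict-via-density-points scheme you carry out. Your filling of $I\setminus\overline B$ by constant-speed geodesics is the natural way to realize the extension step.
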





Let us now give some basic definitions of cones.
\begin{definizione}[Cone $C(e,\sigma)$]
\label{C-cone}
Let $e\in \mathrm{span}\{X_1,X_2\}$ be a unit vector and $\sigma\in (0,1)$. We denote by $C(e,\sigma)$ the one-sided, open, convex cone with axis $e$ and opening $\sigma$, namely
$$
C(e,\sigma):=\{x\in V_1:\langle x,e\rangle >(1-\sigma^2)|x|\}.
$$
Let $B$ be a Borel subset of the real line, and let us now shorten the notation to $C:=C(e,\sigma)$. A Lipschitz curve $\gamma:B\to \mathbb F_{2,3}$, where as usual $\mathbb F_{2,3}$ is endowed with the metric $d_c$, is said to be a $C$-curve if 
  \begin{equation}
      D\gamma(t)\in C\,\,\text{for $\mathcal{H}^1$-almost every}\, t\in B.
      \label{conecondition}
  \end{equation}
Notice that a Lipschitz cuve is always Pansu-differentiable almost everywhere, see \cite{Pansu}, hence the previous $D\gamma(t)$ exists for almost every $t\in B$. If the domain of a $C$-curve $\gamma$ is a compact interval, we will say that $\gamma$ is a \emph{full} $C$-curve.
\end{definizione}

\begin{definizione}[$\mathfrak{N}(e)$ and $\mathbb V(e)$]
For any $e\in V_1$, in the following we will always denote by
\begin{itemize}
    \item[(i)]$\mathfrak{N}(e)$ the $1$-parameter subgroup tangent to $e$ at $0$, i.e., $\mathfrak{N}(e):=\exp(\{te:t\in\mathbb R\})$,
    \item[(ii)] $\mathbb{V}(e)$ the hyperplane orthogonal to $e$ (in the Euclidean sense), i.e., $\mathbb V(e):=\exp(e^\perp)$. Notice that $\mathbb V(e)$ is also a normal homogeneous $9$-dimensional subgroup of $\mathbb F_{2,3}$.
\end{itemize}
\end{definizione}

\begin{definizione}[Cone $K(e,\sigma)$ and semigroup $X(e,\sigma)$]\label{def:conimet}
For any $e\in  \mathrm{span}\{X_1,X_2\}$ and $\sigma\in (0,1)$ we let
\begin{itemize}
    \item[(i)] $X(e,\sigma):=\{\prod_{i=1}^N\delta_{t_i}(v_i):v_i\in C(e,\sigma),\text{ }N\in\N\text{ and }t_i>0\}$,
    \item[(ii)]$ K(e,\sigma):=\{w\in\mathbb F_{2,3}:\dist(w,\mathfrak{N}(e))\leq \sigma \lVert w \rVert\}$.
\end{itemize}
\end{definizione}

Let us remark that the two above notions of cones rise from two different aspects of the nature of the group $\mathbb F_{2,3}$. On the one hand, $X$ is a cone that arises from the algebraic structure of $\mathbb F_{2,3}$, it characterises the points that can be \emph{reached} from the origin by a continuous, piece-wise linear path that goes in the direction of the Euclidean cone $C(e,\sigma)$. On the other hand,  $K(e,\sigma)$ is a metric cone and as such is more suitable for the local description of geometric ($1$-dimensional objects) inside the group $\mathbb F_{2,3}$.

Let us now state the following result which will be of crucial importance in the proof of the result of this note. The following result can be proved by using \cite[Lemma 2.1.4]{tesimonti} and an approximation result as in \cite[Lemma 3.2]{DLDMV19}. We write the proof for the reader's convenience.

\begin{lemma}
	\label{l:curves:Snu}
	Let $ T>0 $, $e\in V_1$, and suppose $\gamma: [0,T]\to\mathbb F_{2,3}$ is a Lipschitz curve such that $ \gamma(0)=0 $. If $ \langle D\gamma(t),e \rangle > (1-\sigma^2)\lvert  D\gamma(t)\rvert $ for almost every $ t\in [0,T]$, then $ \gamma(T) \in \mathrm{cl}(X(e,\sigma))$.
\end{lemma}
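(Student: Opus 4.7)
The plan is to approximate $\gamma$ by piecewise-horizontal curves whose canonical coordinates are step functions with values in $C(e,\sigma)$; the endpoints of such approximations will lie manifestly in $X(e,\sigma)$, and passing to the limit will give $\gamma(T)\in \mathrm{cl}(X(e,\sigma))$.

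First, by \cref{lemma.monti1} I can write $\gamma'(t)=\mathscr{C}(\gamma(t))[h(t)]$ for some $h\in L^\infty([0,T],V_1)$, and by \cref{lemma.monti2} we have $D\gamma(t)=(h_1(t),h_2(t),0,0,0)$ for almost every $t$. Hence the hypothesis reads $h(t)\in C(e,\sigma)$ for almost every $t$. As a preliminary remark, $C(e,\sigma)$ is a convex open cone contained in the open half-space $\{v\in V_1:\langle v,e\rangle>0\}$: for $v_1,v_2\in C(e,\sigma)$ the Euclidean triangle inequality yields
\begin{equation*}
\langle v_1+v_2,e\rangle=\langle v_1,e\rangle+\langle v_2,e\rangle>(1-\sigma^2)(|v_1|+|v_2|)\geq (1-\sigma^2)|v_1+v_2|,
\end{equation*}
and the very same computation shows that the average over an interval of an essentially bounded function taking values in $C(e,\sigma)$ stays in $C(e,\sigma)$ (and in particular is non-zero).

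Next, for every $n\in\N$, I would split $[0,T]$ into the intervals $I_{i,n}:=[(i-1)T/n,iT/n]$, set
\begin{equation*}
v_{i,n}:=\frac{n}{T}\int_{I_{i,n}} h(s)\,ds\in C(e,\sigma),
\end{equation*}
let $h_n:[0,T]\to V_1$ be the step function equal to $v_{i,n}$ on $I_{i,n}$, and let $\gamma_n:[0,T]\to \mathbb F_{2,3}$ be the horizontal curve with $\gamma_n(0)=0$ and canonical coordinates $h_n$. Since $h_n$ is constant on each $I_{i,n}$ and $v_{i,n}\in V_1$, the restriction of $\gamma_n$ to $I_{i,n}$ is a left translate of the horizontal one-parameter subgroup $s\mapsto \exp(sv_{i,n})$; iterating this from the left yields
\begin{equation*}
\gamma_n(T)=\prod_{i=1}^{n}\delta_{T/n}\bigl(\exp(v_{i,n})\bigr)\in X(e,\sigma).
\end{equation*}

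To conclude, I would invoke the continuous dependence of the endpoint of a horizontal curve on its canonical coordinates: Lebesgue's differentiation theorem gives $h_n\to h$ almost everywhere, and since $\|h_n\|_\infty\leq \|h\|_\infty$ also in $L^1([0,T],V_1)$ by dominated convergence; this forces $\gamma_n(T)\to\gamma(T)$, and therefore $\gamma(T)\in\mathrm{cl}(X(e,\sigma))$. The only non-routine step is precisely this last convergence, which is the content of the references cited in the lemma statement (\cite[Lemma 3.2]{DLDMV19} and \cite[Lemma 2.1.4]{tesimonti}); the standard argument is to apply Gronwall's inequality to the two ODE systems driven by $h_n$ and $h$ on a fixed compact neighborhood of the image of $\gamma$, where the polynomial coefficients $\mathscr{C}$ are Lipschitz.
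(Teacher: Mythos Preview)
Your proof is correct and follows essentially the same strategy as the paper: reduce to the canonical coordinates $h$, approximate $h$ in $L^1$ by piecewise constant functions with values in the cone, observe that the corresponding horizontal curves have endpoints in $X(e,\sigma)$, and conclude by a Gronwall argument. The only difference is that you are more explicit about the construction of the approximants (via averaging, using the convexity of $C(e,\sigma)$) and about why the averages remain in the open cone, whereas the paper simply asserts the existence of such a sequence and then carries out the Gronwall estimate in detail.
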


\begin{proof}
Thanks to \cite[Proposition 1.3.3]{tesimonti} there exists a function $h\in L^\infty([0,T],V_1)$ such that
$$
\gamma^\prime(t)=\mathscr{C}(\gamma(t))[h(t)]=\sum_{j=1}^{2}h^j(t)X_j(\gamma(t)).
$$
	for $\mathcal{H}^1$-almost every $t\in [0,T]$. Then, by \cite[Lemma 2.1.4]{tesimonti} we know that $\langle h(t),e\rangle>(1-\sigma^2)\lvert  h(t)\rvert$ for $\mathcal{H}^1$-almost every $t\in [0,T]$. Let $\{h_i\}_{i\in\N}$ be a sequence of piece-wise constant curves in $L^\infty([0,T];V_1)$ such that
	\begin{itemize}
	    \item[(i)]$\langle h_i(t),e\rangle\geq(1-\sigma^2)\lvert h_i(t)\rvert$ for $\mathcal{H}^1$-almost every $t\in[0,T]$ and any $i\in\N$,
	    \item[(ii)]$\lim_{i\to\infty}\lVert h_i-h\rVert_{L^1([0,T];\mathbb R^{2})}=0$.
	\end{itemize}
We can also assume that $\sup_{i\in \mathbb N}\sum_{j=1}^{2}\|h_i^j\|_\infty\leq M$, for some $M>0$. According to to the definition of  $X(e,\sigma)$ and since the $h_i$'s are piece-wise constant, the curves $\gamma_i$ solving the Cauchy problems
	\[
	\begin{cases}
	\dot\gamma_i(t)=\sum_{j=1}^{2} h_i^j(t)X_j(\gamma_i(t)) & \text{ for $\mathcal{H}^1$-almost every $t\in [0,T]$,}\\
	\gamma_i(0)=0,
	\end{cases}
	\]
	are such that $\gamma_i(t)\in \mathrm{cl}(X(e,\sigma))$ for any $t\in [0,T]$. In addition, since $d(\gamma_i(t),0)\leq Mt$ for every $t\in [0,T]$, there exists a compact set $K\subseteq \mathbb{G}$ for which 
	\[
	\bigcup_{i\in \mathbb N} \gamma_i([0,T])\cup \gamma([0,T])\subseteq K.
	\]
	We now claim that
	\begin{equation}
	    	\lim_{i\to \infty} d(\gamma_i(t),\gamma(t))=0,
	    	\label{eq:num:num10}
	\end{equation}
	for every $t\in [0,T]$. If we assume that \eqref{eq:num:num10} holds true, by continuity we inter that $\gamma(t)\in \mathrm{cl}(X(e,\sigma))$, and this concludes the proof.
	Let us fix $t\in [0,T]$ and compute
	\[
	\begin{aligned}
	|\gamma_i(t)-\gamma(t)|&=\left|\int_0^t\sum_{j=1}^{2} \left(h_i^j(s)X_j(\gamma_i(s))-h^j(s)X_j(\gamma(s))\right)\:ds\right|\\
	&\leq \int_0^t\sum_{j=1}^{2} |h_i^j(s)|\left|X_j(\gamma_i(s))-X_j(\gamma(s))\right|\:ds+ \int_0^t\sum_{j=1}^{2} \left|h_i^j(s)-h^j(s)\right| |X_j(\gamma(s))|\:ds.
	\end{aligned}
	\]
	Notice that, by the choice of $h_i^j$, the term
	\[
	\alpha_i(t):=\int_0^t\sum_{j=1}^{2} \left|h_i^j(s)-h^j(s)\right| |X_j(\gamma(s))|\:ds
	\]
	is infinitesimal as $i\to \infty$, and that, by the smoothness of $X_1,X_2$ we can find $C>0$ depending on $T$ and $K$ such that
	\[
	|\gamma_i(t)-\gamma(t)|\leq \alpha_i(t)+CM\int_0^t\left|\gamma_i(s)-\gamma(s)\right|\:ds.
	\]
	We are then in a position to apply Gr\"onwall Lemma to get 
	\[
	|\gamma_i(t)-\gamma(t)|\leq \alpha_i(t)e^{CMt},
	\]
	and letting $i\to \infty$, we conclude the proof of \eqref{eq:num:num10} and in turn of the proposition.
\end{proof}

\section{Construction of a Lipschitz fragment unrectifiable with respect to full curves}

We start this section with a rigidity result for Lipschitz curves that satisfy the hypothesis of \cref{l:curves:Snu}.
\begin{proposizione}\label{propconoXcurv}
Let $\sigma\in (0,1)$ and suppose that $\gamma:[0,T]\to \mathbb F_{2,3}$ is a Lipschitz curve such that $\langle D\gamma(t),X_2\rangle>(1-\sigma^2)\lvert D\gamma(t)\rvert$. Then, for any $t\in [0,T]$ we have
$$
\gamma(s)\in \gamma(t)\cdot\big\{z\in \mathbb F_{2,3}: x_2\geq 0\text{ and }x_2^3x_4-2x_2^2x_3^2-6x_2x_3x_5-6x_5^2\geq 0\big\},\qquad \text{whenever }s\in [t,T],
$$
where we stress that $(x_1,x_2,x_3,x_4,x_5)$ are the exponential coordinates \textbf{of the second type} on $\mathbb F_{2,3}$ associated to the basis $\{X_1,\dots,X_5\}$, see \cite[Section 5]{bellettini2019sets}.
\end{proposizione}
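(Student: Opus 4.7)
The plan is to reduce the statement to \cref{l:curves:Snu} via a left-translation argument and then to invoke the explicit description of $\mathrm{cl}(X(X_2,\sigma))$ computed in \cite[Proposition 5.7]{bellettini2019sets}.

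First, I would fix $t\in[0,T]$ and introduce the translated curve $\tilde\gamma(s):=\gamma(t)^{-1}\cdot\gamma(t+s)$, defined on $[0,T-t]$. Since $d_c$ is left-invariant, $\tilde\gamma$ is Lipschitz with the same constant as $\gamma$ and $\tilde\gamma(0)=0$. A direct computation gives
\[
\tilde\gamma(s)^{-1}\cdot\tilde\gamma(s+r)\;=\;\gamma(t+s)^{-1}\cdot\gamma(t+s+r),
\]
from which $D\tilde\gamma(s)=D\gamma(t+s)$ at every $s\in[0,T-t]$ where the Pansu derivative exists. Hence the cone condition on $\gamma$ transfers to $\tilde\gamma$, and \cref{l:curves:Snu} applied to $\tilde\gamma$ with $e=X_2$ yields
\[
\gamma(t)^{-1}\cdot\gamma(s)\;\in\;\mathrm{cl}(X(X_2,\sigma))\qquad\text{for every }s\in[t,T].
\]
Since the conclusion of the proposition is precisely that $\gamma(s)$ lies in $\gamma(t)\cdot S$ for an appropriate set $S$, the proof is reduced to showing that $\mathrm{cl}(X(X_2,\sigma))\subseteq S$.

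Second, I would identify this inclusion with the known description of the closed semigroup. Because $C(X_2,\sigma)\subseteq\{v\in V_1:\langle v,X_2\rangle>0\}$, the set $X(X_2,\sigma)$ is contained in the larger semigroup generated by the full open half-plane $\{\langle\cdot,X_2\rangle>0\}\cap V_1$ under positive dilations. The explicit computation carried out in \cite[Proposition 5.7]{bellettini2019sets}, performed precisely in the exponential coordinates of the second type on $\mathbb F_{2,3}$ fixed here, expresses the closure of this larger semigroup as
\[
\bigl\{z\in\mathbb F_{2,3}:x_2\geq 0\text{ and }x_2^3x_4-2x_2^2x_3^2-6x_2x_3x_5-6x_5^2\geq 0\bigr\}.
\]
Combining this equality with the inclusion obtained in the previous step yields the statement.

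The only substantive ingredients are \cref{l:curves:Snu} and the polynomial description of the closed semigroup imported from \cite[Proposition 5.7]{bellettini2019sets}; both are used as black boxes. The remaining work is the bookkeeping of translating the curve and checking that the cone condition on the Pansu derivative is preserved under left translation, which is routine. For this reason I do not expect any genuine obstacle in the argument; the whole proof should be short once the two cited results are in hand.
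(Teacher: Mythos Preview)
Your proposal is correct and follows essentially the same approach as the paper's proof: reduce to $t=0$ and $\gamma(0)=0$ by left translation (noting that the Pansu derivative, and hence the cone condition, is preserved), apply \cref{l:curves:Snu} to land in $\mathrm{cl}(X(X_2,\sigma))$, and then invoke \cite[Proposition 5.7]{bellettini2019sets} for the explicit polynomial description. Your write-up is slightly more explicit than the paper's about why only the inclusion $\mathrm{cl}(X(X_2,\sigma))\subseteq S$ is needed (via $C(X_2,\sigma)\subseteq\{\langle\cdot,X_2\rangle>0\}$), but this is exactly what the paper is using implicitly.
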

 
\begin{proof}
Without loss of generality, we can assume that $t=0$, otherwise we let $\gamma(\cdot):=\gamma(\cdot+t)\rvert_{[0,T-t]}$. Moreover, up to left translation we can also assume that $\gamma(0)=0$. \cref{l:curves:Snu} shows that for any $s\in [0,T]$ we have that $\gamma(s)\in \mathrm{cl}(X(X_2,\sigma))$ and thus, thanks to \cite[Proposition 5.7]{bellettini2019sets}, we infer that
$$\gamma(s)\in \big\{x\in \mathbb F_{2,3}: x_2\geq 0\text{ and }x_2^3x_4-2x_2^2x_3^2-6x_2x_3x_5-6x_5^2\geq 0\big\}.$$
This concludes the proof of the proposition.
\end{proof}

The following proposition contains the main construction of this note.
\begin{proposizione}\label{curvunr}
There exists a compact set $K\subseteq [0,1]$ of positive $\mathcal{H}^1$-measure and a biLipschitz curve $\gamma:K\to\mathbb{F}_{2,3}$ such that
$$
\gamma(t)=(0,t,0,\gamma_4(t),0),
$$
and where $\gamma_4$ is a striclty decreasing function.
\end{proposizione}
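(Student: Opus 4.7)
Plan: I parameterize the desired curve in exponential coordinates of the second type as $\gamma(t)=(0,t,0,\gamma_4(t),0)$ and reduce the claim to a purely one-dimensional construction of the scalar function $\gamma_4$. Since $[X_2,X_4]=0$, a direct application of the explicit group law yields $\gamma(s)^{-1}\cdot\gamma(t)=(0,t-s,0,\gamma_4(t)-\gamma_4(s),0)$, and therefore \eqref{eqn:NormaCoordinate} gives
$$
d(\gamma(s),\gamma(t))=\max\bigl\{\varepsilon_1|t-s|,\ \varepsilon_3|\gamma_4(t)-\gamma_4(s)|^{1/3}\bigr\}.
$$
Since $d_c$ is biLipschitz equivalent to $d$ (cf.\ \cref{def:Derivative}), the lower biLipschitz inequality $d(\gamma(s),\gamma(t))\geq\varepsilon_1|t-s|$ is automatic, and the upper bound reduces to the single real-variable requirement $|\gamma_4(t)-\gamma_4(s)|\leq C|t-s|^3$ for $s,t\in K$. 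It therefore suffices to exhibit a compact set $K\subseteq[0,1]$ with $\mathcal H^1(K)>0$ and a strictly decreasing function $\gamma_4\colon K\to\mathbb R$ that is $3$-H\"older.

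The set $K$ will be a fat Cantor set. Put $K_0:=[0,1]$, fix the gap sequence $g_n:=4^{-n-1}$, and let $K_{n+1}$ be obtained from $K_n$ by removing the open central interval of length $g_n$ from each of its $2^n$ components; then $K:=\bigcap_n K_n$ satisfies $\mathcal H^1(K)=1-\sum_{n\geq 0}2^ng_n=1/2$. Label the components of $K_n$ from left to right as $I_{n,1},\dots,I_{n,2^n}$. Any pair of distinct points $s<t$ of $K$ belongs to the same component $I_{n(s,t),k}$ but to distinct sub-components $I_{n(s,t)+1,\cdot}$ for a unique integer $n(s,t)\geq 0$, and in that case $|t-s|\geq g_{n(s,t)}$.

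On the target side I build a matching nested family. Let $h_n:=g_n^3=64^{-n-1}$ and choose recursively closed intervals $J_{n,1},\dots,J_{n,2^n}\subset\mathbb R$ ordered from top to bottom, each of length $h_n$, so that $J_{n+1,2k-1}$ and $J_{n+1,2k}$ are contained in $J_{n,k}$ with $J_{n+1,2k-1}$ strictly above $J_{n+1,2k}$ and separated by a gap of length $\tilde g_n:=h_n-2h_{n+1}=\tfrac{31}{32}h_n>0$. For $t\in K$ let $k_n(t)$ be the unique index with $t\in I_{n,k_n(t)}$, and define $\gamma_4(t)$ to be the unique element of $\bigcap_n J_{n,k_n(t)}$, which is a single point since $h_n\to 0$. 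Strict monotonicity is then immediate: for $s<t$ in $K$ and $n=n(s,t)$, the values $\gamma_4(s),\gamma_4(t)$ lie in the disjoint intervals $J_{n+1,2k-1},J_{n+1,2k}$, hence $\gamma_4(s)-\gamma_4(t)\geq\tilde g_n>0$. The $3$-H\"older estimate is equally immediate:
$$
|\gamma_4(t)-\gamma_4(s)|\;\leq\;|J_{n(s,t),k}|\;=\;h_{n(s,t)}\;=\;g_{n(s,t)}^3\;\leq\;|t-s|^3.
$$

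The actual obstacle is the tight balance of scales enforced by the cube-root in \eqref{eqn:NormaCoordinate} (which in turn comes from $X_4$ sitting in the third layer of the stratification): the $g_n$ must decay fast enough that the total removed length $\sum 2^ng_n$ stays strictly below $1$, and yet the cube $h_n=g_n^3$ must remain large enough to host a strictly nested family of $2^n$ closed target intervals with strictly positive gaps. The geometric choice $g_n=4^{-n-1}$ sits precisely in the window where both requirements hold and yields the desired biLipschitz curve $\gamma$.
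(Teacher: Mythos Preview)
Your proof is correct and follows essentially the same strategy as the paper: build a fat Cantor set $K$ and a strictly decreasing $\gamma_4$ by a matched Cantor-type construction on the target, with the target scale equal to the cube of the domain gap so that the box-norm distance reduces to $|t-s|$. The paper carries this out with gaps $2\cdot 8^{-k}$ and an inductive sequence of piecewise-constant approximants $\gamma^k$ converging uniformly, whereas you choose gaps $4^{-n-1}$ and define $\gamma_4$ directly as the unique point in a nested family of target intervals; these are cosmetic differences in bookkeeping and parameters, not in the underlying idea.
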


\begin{proof}
Throughout the proof, we will work in exponential coordinates of the second type, see \cite[Section 5]{bellettini2019sets}.
Since the result of the Proposition is invariant up to biLipschitz equivalent distances, we work with the distance introduced in \cref{smoothnorm}.

Before constructing the curve, we need to construct the set $K$. In order to do this, for any $k\in\N$ we let $E_k=\{\mathfrak{J}_{j}^k:j\in\{1,\ldots,2^k\}\}$ be a family of $2^k$ intervals with the following properties
\begin{itemize}
    \item[(i)] $E_1=\{[0,1]\}$,
    \item[(ii)] for any $k\geq2$ and any $1\leq j\leq 2^{k-1}$, defined $c_j^{k-1}$ to be the barycenter of the interval $\mathfrak{J}_{j}^{k-1}$, we have
    \begin{equation}
        \begin{split}
            \mathfrak{J}_{2j-1}^k=&(-\infty,c_j^{k-1})\cap\mathfrak{J}_{j}^{k-1}\setminus (c_j^{k-1}-1/8^{k-1},c_j^{k-1}+1/8^{k-1}),\\
            \mathfrak{J}_{2j}^k=&(c_j^{k-1},\infty)\cap\mathfrak{J}_{j}^{k-1}\setminus (c_j^{k-1}-1/8^{k-1},c_j^{k-1}+1/8^{k-1}).
            \nonumber
        \end{split}
    \end{equation}
\end{itemize}
Note that the requirements (i) and (ii) imply that if $j_2> j_1$ and $t\in \mathfrak{J}_{j_1}^k$, $s\in \mathfrak{J}_{j_2}^k$ then $s> t$.
For any $k$ let
$$
C(k):=\bigcup_{j=1}^{2^k}\mathfrak{J}_j^k.
$$
It is immediate to see that the sets $C(k)$ are nested compact sets such that $\mathcal{H}^1(C(k))\geq 2/3$ and thus the set $K:=\bigcap_{k\in\N}C(k)$
is a compact set for which $\mathcal{H}^1(K)\geq2/3$ thanks to the continuity of the measure from below. 

First of all, let us note that the image of the curve $t\mapsto(0,t,0,0,0)=:\eta(t)$ is isometric to the real line, when $\mathbb{F}_{2,3}$ is endowed with the norm introduced above. Secondly, let $w$ be such that $w_1=0$ and note that
$$w\cdot\eta(t)=(0,w_2+t,w_3,w_4,w_5).$$

We are ready to construct a sequence of Lipschitz functions $\gamma^k:C(k)\to\mathbb F_{2,3}$ such that for any $k\geq 2$ we have
\begin{itemize}
\item[(i)] $\gamma^{k}$ is such that for any $t\in C(k)$ we have
\begin{equation}
    0\leq\gamma^{k-1}_4(t)-\gamma^{k}_4(t)\leq \varepsilon_3^{-3}8^{-6(k-1)},
    \label{M}
\end{equation}
    \item[(ii)] if $t,s\in C(K)$ and $t\neq s$, we have
    \begin{equation}
      \frac{\varepsilon_3\lvert\gamma_4^k(t)-\gamma_4^k(s)\rvert^{1/3}}{|s-t|}\leq \frac{8^{-1}-8^{-k}}{1-8^{-1}}=:\mathfrak{c}(k).
        \label{P}
    \end{equation}
    \item[(iii)]  $j\in \{1,\ldots, 2^{k}\}$ there exists an $\omega(k,j)\in \{-\varepsilon_3^{-3}\sum_{\ell=1}^{k-1} \tau_\ell 8^{-6\ell}:(\tau_1,\ldots,\tau_{k-1})\in \{0,1\}^{k-1} \}=:\mathfrak{T}(k)$ in such a way that $\gamma^{k}\rvert_{\mathfrak{J}_{j}^k}(t)=(0,t,0,\omega(k,j),0)$.
    \item[(iv)]  if $s>t$ belong to different intervals of $E_k$, then 
    $\gamma^k_4(s)-\gamma^k_4(t)\leq -\varepsilon^{-3}_38^{-6(k-1)}$.
\end{itemize}

We construct the $\gamma^k$'s inductively: for $k=1$, we let $\gamma^1:=\eta\rvert_{[0,1]}$, and for a general $k$ we let
\begin{equation}
    \gamma^{k}(t):=\begin{cases}
    \gamma^{k-1}(t) & \text{ if } t\in\mathfrak{J}_{j}^k\textrm{ and $j$ is odd},\\
    \left(\delta_{8^{-2(k-1)}}(\varepsilon_3^{-3}X_4)\right)^{-1}\cdot\gamma^{k-1}(t) & \text{ if } t\in\mathfrak{J}_{j}^k\textrm{ and $j$ is even},
    \end{cases}
\end{equation}
where $\varepsilon_3$ is the constant in the definition of the norm $\|{\cdot}\|$ relative to the third layer. 

Let us prove by induction that the properties (i),(ii), (iii) and (iv) are satisfied by the $\gamma^{k}$'s for any $k\geq 2$. The curve $\gamma^2$ is easily seen to be defined on $[0,3/8]\cup [5/8,1]$ and
\begin{equation}
    \gamma^{2}(t):=\begin{cases}
    (0,t,0,0,0) & \text{ if } t\in[0,3/8],\\
    (0,t,0, -8^{-6}\varepsilon_3^{-3},0) & \text{ if } t\in[5/8,1].
    \end{cases}
\end{equation}
The fact that $\gamma^2$ satisfies items (i), (ii) and (iii), (iv) above is immediate.

Let us now assume that for any $j\in\{1,\ldots,k\}$ we have constructed the curve $\gamma^k$ with the required properties (i), (ii), (iii) and (iv).

Let us note that if $t\in\mathfrak{J}^{k+1}_{j}$ and $j$ is odd, then $\gamma^{k}(t)=\gamma^{k+1}(t)$. Otherwise, if $t\in\mathfrak{J}_{j}^{k+1}$ and $j$ is even, we have by definition of the families $E_k$ that $t\in\mathfrak{J}_{j/2}^k$ and
\begin{equation}
\begin{split}
       \gamma^{k+1}(t)=\left(\delta_{8^{-2k}}(\varepsilon_3^{-3}X_4)\right)^{-1}\cdot\gamma^{k}(t)=&(0,0,0, -8^{-6k}\varepsilon_3^{-3},0)\cdot(0,t,0, \omega(k,j/2),0)
       =(0,t,0, -8^{-6k}\varepsilon_3^{-3}+\omega(k,j/2),0).
       \nonumber
\end{split}
   \end{equation}
The above computation proves simultaneously \eqref{M} and item (iii) since $-8^{-6k}\varepsilon_3^{-3}+\omega(k,j/2)\in \mathfrak{T}(k+1)$.

   Let us prove that item (ii) holds.  Let $s,t\in C(k+1)$ be such that $t,s \in C(k+1)$. If $s,t$ belong to the same interval of $E_{k+1}$, then there is nothing to prove. If this is not the case, since $s$ and $t$ do not belong to the same interval in $E_{k+1}$, we infer that $\lvert s-t\rvert>2\cdot 8^{-k}$. This, together with (ii) applied to $\gamma^{k}$ and the fact \eqref{M} holds for $\gamma^{k+1}$, imply that
   \begin{equation}
   \begin{split}
          \frac{\varepsilon_3\lvert\gamma_4^{k+1}(t)-\gamma_4^{k+1}(s)\rvert^{1/3}}{|s-t|}&\leq\frac{\varepsilon_3\lvert\gamma_4^{k+1}(t)-\gamma_4^{k}(t)\rvert^{1/3}}{|s-t|}+\frac{\varepsilon_3\lvert\gamma_4^{k}(t)-\gamma_4^{k}(s)\rvert^{1/3}}{|s-t|}+\frac{\varepsilon_3\lvert\gamma_4^{k}(s)-\gamma_4^{k+1}(s)\rvert^{1/3}}{|s-t|}\\
          &\leq 2\cdot \frac{8^{-2k}}{2\cdot 8^{-k}}+\mathfrak{c}(k)=\mathfrak{c}(k+1),
   \end{split}
       \label{Pk+1}
   \end{equation}
and this proves \eqref{P} for the curve $\gamma^{k+1}$. Let us now prove item (iv) and complete the induction. Let $s>t$ belong to different intervals of $E_{k+1}$. If $s>t$ belong to the same interval of $E_k$, we have 
$$
\gamma^{k+1}_4(s)-\gamma^{k+1}_4(t)=\gamma^k_4(s)-\gamma^k_4(t)-\varepsilon_3^{-3}8^{-6k}=-\varepsilon_3^{-3}8^{-6k},
$$
and thus (iv) is proved in this case. If $s>t$ belong to different intervals of $E_k$, we have
   \begin{equation}
   \begin{split}
       \gamma^{k+1}_4(s)-\gamma^{k+1}_4(t)=&(\gamma^{k+1}_4(s)-\gamma^k_4(s))+\gamma^k_4(s)-\gamma^k_4(t)+(\gamma^k_4(t)-\gamma^{k+1}_4(t))
       \leq0-\varepsilon_3^{-3}8^{-6(k-1)}+\varepsilon_3^{-3}8^{-6k}\leq -\varepsilon_3^{-3}8^{-6k},
       \nonumber
   \end{split}
   \end{equation}
   where the second last inequality above comes from \eqref{M} and the fact that the inductive hypothesis implies that $\gamma^k$ satisfies the hypothesis (i), (ii), (iii) and (iv). 
   
   Let us now notice that for every $k\geq 0$, we have that $\gamma^{k+1}$ is an isometric immersion of $(C(k+1),\lvert\cdot\rvert_{\mathrm{eu}})$ into $(\mathbb{F}_{2,3},\|\cdot\|)$. Indeed, since  $\mathfrak{c}(k)\leq 1/7$ for any $k\geq 2$, we infer that when $s,t\in C(k+1)$, from item (iv) above we have
\begin{equation}
\begin{split}
        \lVert\gamma^{k+1}(t)^{-1}\cdot\gamma^{k+1}(s)\rVert=&\lVert(0,s-t,0,\gamma_4^{k+1}(s)-\gamma_4^{k+1}(t),0)\rVert=\max\{\lvert s-t\rvert,\varepsilon_3\lvert\gamma_4^{k+1}(s)-\gamma_4^{k+1}(t)\rvert^{1/3}\}=\lvert s-t\rvert.\nonumber
\end{split}
\end{equation}
On the other hand the identity $ \lVert\gamma^{k+1}(t)^{-1}\cdot\gamma^{k+1}(s)\rVert=\lvert s-t\rvert$ is trivially satisfied when $s,t$ belong to the same interval of $E_{k+1}$ thanks to the fact that (iii) holds for $\gamma^{k+1}$. Then the sought claim is proved. 

Restricting the functions $\gamma^{k}$ to $K$, by Ascoli--Arzelà we infer that the curves $\gamma^k\rvert_{K}$ converge uniformly, up to subsequences, to an isometric embedding of $(K,|\cdot|_{\mathrm{eu}})$ in $(\mathbb F_{2,3},\|\cdot\|)$ that we denote by $\gamma$. Also, note that thanks to (iii), we have $\gamma(t)=(0,t,0,\gamma_4(t),0)$.

Let us prove that the function $\gamma_4$ is strictly decreasing. Let $s>t$ be two elements of $K$ and note that by the very definition of $K$ there must exist a $k\in\N$ such that $s$ and $t$ lie in two different elements of $E_k$. Thanks to properties (i), (ii), and (iv) for the curve $\gamma^k$, we infer that
\begin{equation}
    \begin{split}
\varepsilon_3^{-3}8^{-6(k-1)}&\leq \gamma_4^k(t)-\gamma_4^k(s)\leq \lvert \gamma_4(t)-\gamma_4^k(t)\rvert+\gamma_4(t)-\gamma_4(s)+ \lvert \gamma_4(s)-\gamma_4^k(s)\rvert\\
&\leq \sum_{j=k}^{\infty} \lvert \gamma_4^{j+1}(t)-\gamma_4^j(t)\rvert+\gamma_4(t)-\gamma_4(s)+\sum_{j=k}^{\infty} \lvert \gamma_4^{j+1}(s)-\gamma_4^j(s)\rvert\\
&\leq  2\varepsilon_3^{-3}\sum_{j=k}^{\infty} 8^{-6j}+\gamma_4(t)-\gamma_4(s)=\frac{2\varepsilon_3^{-3}8^{-6k}}{1-8^{-6}}+\gamma_4(t)-\gamma_4(s).
\label{numberoevviva}
    \end{split}
\end{equation}
Rearranging terms of the above expression we finally infer that
$$5\varepsilon_3^{-3}8^{-6k}\leq \gamma_4(t)-\gamma_4(s),$$
proving the fact that $\gamma_4$ is strictly decreasing and in turn the proposition.
\end{proof}

From the previous construction we immediately deduce the following 

\begin{proposizione}\label{unrunr}
For any $\sigma>0$ and any full $C(X_2,\sigma)$-curve $\eta$ we have that
$$
\mathrm{Card}(\mathrm{Im}(\gamma)\cap\mathrm{Im}(\eta) )\leq 1,
$$
where $\gamma$ is the curve constructed in \cref{curvunr}.
\end{proposizione}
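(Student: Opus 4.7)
The plan is to argue by contradiction, combining the semigroup rigidity of \cref{propconoXcurv} with the strict monotonicity of $\gamma_4$ established in \cref{curvunr}. Suppose the intersection $\mathrm{Im}(\gamma)\cap\mathrm{Im}(\eta)$ contains two distinct points. Since $\gamma$ is biLipschitz, hence injective, I may choose $s_1,s_2\in K$ with $s_1\neq s_2$ and parameters $t_1,t_2$ in the compact interval of definition of the full $C(X_2,\sigma)$-curve $\eta$ such that $\gamma(s_i)=\eta(t_i)$ for $i=1,2$. Relabeling if necessary, I assume $t_1\leq t_2$.

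Applying \cref{propconoXcurv} to $\eta$ with the choices $t=t_1$ and $s=t_2$ forces
$$\eta(t_1)^{-1}\cdot\eta(t_2)\in\bigl\{z\in\mathbb{F}_{2,3}:z_2\geq 0\text{ and }z_2^{3}z_4-2z_2^{2}z_3^{2}-6z_2z_3z_5-6z_5^{2}\geq 0\bigr\}.$$
Because each $\gamma(s_i)=(0,s_i,0,\gamma_4(s_i),0)$ has vanishing first, third and fifth coordinates, a direct substitution into the explicit group law of \cref{sec:Preliminaries} shows that every mixed monomial contributes zero, so that
$$\eta(t_1)^{-1}\cdot\eta(t_2)\;=\;\gamma(s_1)^{-1}\cdot\gamma(s_2)\;=\;\bigl(0,\,s_2-s_1,\,0,\,\gamma_4(s_2)-\gamma_4(s_1),\,0\bigr).$$

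The condition $z_2\geq 0$ then gives $s_2\geq s_1$, and since $s_1\neq s_2$ I conclude $s_2>s_1$. With $z_3=z_5=0$ the quartic inequality reduces to $(s_2-s_1)^{3}\bigl(\gamma_4(s_2)-\gamma_4(s_1)\bigr)\geq 0$; but $(s_2-s_1)^{3}>0$, while the strict decrease of $\gamma_4$ from \cref{curvunr} gives $\gamma_4(s_2)-\gamma_4(s_1)<0$, yielding the desired contradiction. There is no real obstacle in this argument: once \cref{propconoXcurv} is available, the only step demanding any bookkeeping is verifying that the mixed terms in the group product indeed vanish on the very special coordinate form of $\gamma$, after which the clean polynomial semigroup constraint from $X(X_2,\sigma)$ immediately clashes with the strict monotonicity of $\gamma_4$.
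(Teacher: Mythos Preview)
Your proof is correct and follows essentially the same route as the paper's: both apply \cref{propconoXcurv} to two intersection points, compute $\gamma(s_1)^{-1}\cdot\gamma(s_2)$ explicitly, and reach a contradiction between the semigroup inequality $(s_2-s_1)^3(\gamma_4(s_2)-\gamma_4(s_1))\geq 0$ and the strict decrease of $\gamma_4$. Your presentation is in fact slightly cleaner, since by relabeling along the $\eta$-parameter and then \emph{deducing} $s_2>s_1$ from the constraint $z_2\geq 0$, you sidestep the monotonicity claim the paper invokes to justify its inclusion $(\dagger)$.
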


\begin{proof}
Since $\mathrm{Im}(\gamma)$ and $\mathrm{Im}(\eta)$ are compact sets, the preimage by $\gamma$ in $K$ of their intersection is a compact set $K(\eta)$. We let $t_0:=\min K(\eta)$ and note that thanks to \cref{propconoXcurv}, we have that
\begin{equation}
    \begin{split}
     &\qquad\qquad\qquad\mathrm{Im}(\eta)\cap \mathrm{Im}(\gamma)\overset{(\dagger)}{\subseteq} \gamma(t_0)\cdot\Big\{x\in \mathbb F_{2,3}: x_2\geq 0\text{ and }x_2^3x_4-2x_2^2x_3^2-6x_2x_3x_5-6x_5^2\geq 0\Big\}\\
     =&\Big\{z\in \mathbb F_{2,3}: z_2\geq t_0\text{ and }(z_2-t_0)^3(z_4-\gamma_4(t_0))-2(z_2-t_0)^2z_3^2-6(z_2-t_0)z_3(z_5+t_0z_3)-6(z_5+t_0z_3)^2\underset{(*)}{\geq} 0\Big\},
     \nonumber
    \end{split}
\end{equation}
where the inclusion $(\dagger)$ can be proved by noting that if $\gamma(s_0)=\eta(t_0)$ and $\gamma(s_1)=\eta(t_1)$ and $s_0\leq s_1$ then $t_0\leq t_1$ thanks to the fact that $\eta$ and $\gamma$ are $C(e,\sigma)$-curves. 
In particular we infer for any $t\in K(\eta)\setminus \{ t_0\}$, the coordinates of $\gamma(t)$ must satisfy the inequality ($*$) that in this case boils down to
\begin{equation}
    (t-t_0)^3(\gamma_4(t)-\gamma_4(t_0))\geq 0.
    \nonumber
\end{equation}
Since by construction $t_0$ was the minimum of $K(\eta)$, the above inequality allows us to infer that $\gamma_0(t)\geq\gamma_4(t_0)$, that is in contradiction with the fact that $\gamma_4$ is strictly decreasing. This means that $\gamma(t_0)$ can be the only intersection of the two curves concluding the proof of the proposition.
\end{proof}

Let us now recall the definition of intrinsic Lipschitz graph.
\begin{definizione}[Intrinsic Lipschitz graph]\label{def:IntrinsicLipschitzGraph}
Given $e\in V_1$, a Borel subset $B$ of $\mathfrak{N}(e)$, and a map $\varphi:B\to \mathbb{V}(e)$, we say that $\varphi$ is {\em intrinsic Lipschitz} if $\mathrm{graph}(\varphi):=\{\Phi(t):t\in B\}$
is a $K(e,\sigma)$-set for some $0\leq\sigma<1$, where $\Phi(t):=te\cdot \varphi(te)$ is the graph map of $\varphi$. Note in particular that $\Phi$ satisfies the inequality
$$\dist(\Phi(s),\Phi(t)\mathfrak{N}(X_2))\leq \sigma \lVert\Phi(t)^{-1}\Phi(s)\rVert,$$
for any $s,t\in B$.
Furthermore, if a Borel set $E\subseteq \mathbb{F}_{2,3}$ is the graph of an intrinsic Lipschitz, we will call it \emph{intrinsic Lipschitz graph}.
\end{definizione}

Note that the curve $P_{\mathbb{V}(e)}(\gamma)=\gamma_4$ constructed in Proposition \ref{curvunr} is an intrinsic Lipschitz map, indeed for any $s,t\in K$ we have
\begin{equation}
    \begin{split}
        \dist(\gamma(s),\gamma(t)\mathfrak{N}(X_2))=&\inf_{\lambda\in\R}\lVert(0,t-s-\lambda,0,\gamma_4(t)-\gamma_4(s),0)\rVert\\
        =&\varepsilon_3\lvert \gamma_4(t)-\gamma_4(s)\rvert^{1/3}\overset{\eqref{P}}{\leq}\lvert s-t\rvert/7\leq\lVert\gamma(t)^{-1}\gamma(s)\rVert/7.
    \end{split}
\end{equation}
Furthermore, if we let $\Phi:I\subseteq \mathfrak{N}(X_2)\to \mathbb{F}_{2,3}$, where $I$ is a compact interval, to be the graph map of an intrinsic Lipschitz function where the cones have opening $\sigma$, it is immediate to see that $\Phi$ is a Lipschitz map from the compact interval $I$ into $\mathbb{F}_{2,3}$, and thus for $\mathcal{H}^1$-almost any $t\in I$ we have
\begin{equation}
    \begin{split}
        \dist(D\Phi(t),\mathfrak{N}(X_2))=&\lim_{s\to t}    \dist(\delta_{1/\lvert s-t\rvert}(\Phi(t)^{-1}\Phi(s)),\mathfrak{N}(X_2))=\lim_{s\to t}\frac{\dist(\Phi(t)^{-1}\Phi(s),\mathfrak{N}(X_2))}{\lvert s-t\rvert}\\
        \leq&\liminf_{s\to t}\frac{\sigma\|\Phi(t)^{-1}\Phi(s)\|}{\lvert s-t\rvert}=\sigma \| D\Phi(t)\|.
    \end{split}
\end{equation}
Since $0<\sigma<1$, then $\| D\Phi(t)\|>0$ and this, thanks to few omitted elementary algebraic computations, shows that $\Phi$ is a full $C(X_2,\sqrt{1-\sqrt{1-\sigma^2}})$-curve. From the previous reasoning and from \cref{unrunr} we deduce that that $\mathcal{H}^1(\mathrm{Im}(\gamma)\cap \mathrm{Im}(\Phi))=0$. Hence, by writing every open set in $\mathbb R$ as a union of countably many compact intervals, the above argument together with \cref{unrunr}, yields the main result of this note.

\begin{teorema}\label{prop.intersez.curves}
There exists an intrinsic Lipschitz $X_2$-graph $E$ in $\mathbb{F}_{2,3}$ such that for any intrinsic Lipschitz map $\varphi:\Omega\to \mathbb{V}(X_2)$, where $\Omega$ is an open subset of $\mathfrak{N}(X_2)$, we have
$$\mathcal{H}^1(E\cap \mathrm{graph}(\varphi))=0.$$
\end{teorema}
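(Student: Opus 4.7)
The plan is to take $E$ to be the graph of the intrinsic Lipschitz function $\varphi_0 : K \to \mathbb{V}(X_2)$ defined by $\varphi_0(te) := P_{\mathbb{V}(X_2)}(\gamma(t))$, where $\gamma$ is the biLipschitz curve furnished by \cref{curvunr}. The short calculation displayed immediately after \cref{def:IntrinsicLipschitzGraph}, which uses property \eqref{P} established in the construction of $\gamma$, already shows that $\varphi_0$ is intrinsic Lipschitz (with a cone opening no larger than $1/7$). Hence $E := \mathrm{graph}(\varphi_0)$ is a bona fide intrinsic Lipschitz $X_2$-graph in $\mathbb F_{2,3}$, and $E = \mathrm{Im}(\gamma)$ as a subset of the group.

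Now fix any intrinsic Lipschitz map $\widetilde\varphi : \Omega \to \mathbb{V}(X_2)$, with $\Omega \subseteq \mathfrak{N}(X_2) \cong \mathbb R$ open, and let $\sigma \in (0,1)$ be such that $\mathrm{graph}(\widetilde\varphi)$ is a $K(X_2,\sigma)$-set. Since every open subset of $\mathbb R$ is a countable disjoint union of open intervals, and every open interval is a countable increasing union of compact intervals, by countable subadditivity of $\mathcal{H}^1$ it is enough to prove that
\[
\mathcal{H}^1\bigl(E \cap \mathrm{graph}(\widetilde\varphi|_I)\bigr) = 0
\]
for every compact interval $I \subseteq \Omega$.

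For such an $I$, the graph map $\Phi_I : I \to \mathbb F_{2,3}$, $\Phi_I(te) := te \cdot \widetilde\varphi(te)$, is Lipschitz with respect to $d_c$ because $\widetilde\varphi$ satisfies the $K(X_2,\sigma)$-cone condition on a compact interval; this is exactly the input used in the block following \cref{def:IntrinsicLipschitzGraph}, whose argument then shows that $\Phi_I$ is a \emph{full} $C(X_2,\sqrt{1-\sqrt{1-\sigma^2}})$-curve. Applying \cref{unrunr} to $\eta = \Phi_I$ yields $\mathrm{Card}(\mathrm{Im}(\gamma) \cap \mathrm{Im}(\Phi_I)) \leq 1$, so in particular $\mathcal{H}^1(\mathrm{Im}(\gamma) \cap \mathrm{Im}(\Phi_I)) = 0$. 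Since $\mathrm{Im}(\Phi_I) = \mathrm{graph}(\widetilde\varphi|_I)$ and $E = \mathrm{Im}(\gamma)$, the claim for the compact interval $I$ follows, and summing over the countable decomposition of $\Omega$ finishes the proof.

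There is essentially no obstacle left at this stage: the delicate geometric content is entirely absorbed into \cref{curvunr} (construction of a biLipschitz $\gamma$ with strictly decreasing fourth coordinate) and \cref{unrunr} (the rigidity forced by the semigroup $X(X_2,\sigma)$ via \cref{propconoXcurv}). The only point worth double-checking in the write-up is the uniform translation from the $K(X_2,\sigma)$-cone condition defining intrinsic Lipschitz graphs to the $C(X_2,\sigma')$-cone condition defining $C$-curves, but this is the routine algebraic manipulation already carried out in the paragraph preceding the statement.
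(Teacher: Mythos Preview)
Your proposal is correct and follows essentially the same route as the paper: take $E=\mathrm{Im}(\gamma)$ with $\gamma$ from \cref{curvunr}, use the computation after \cref{def:IntrinsicLipschitzGraph} to see that $E$ is an intrinsic Lipschitz $X_2$-graph and that any intrinsic Lipschitz graph map on a compact interval is a full $C(X_2,\sigma')$-curve, then apply \cref{unrunr} and a countable exhaustion of $\Omega$ by compact intervals. There is nothing to add or correct.
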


Let us now show that the example provided by \cref{prop.intersez.curves} exists in every horizontal direction. The forthcoming theorem, together with the argument above \cref{prop.intersez.curves}, will enable us to prove the analogous statement of \cref{prop.intersez.curves} obtained substituting $X_2$ with an arbitrary unit vector $e\in V_1$. This in turn will conclude the proof of \cref{thm:EveryDirectionINTRO}.
\begin{teorema}\label{thm:EveryDirection}
For any unit vector $e\in V_1$ there exists a biLipschitz curve $\gamma_e:K\to \mathbb{F}_{2,3}$, where $K$ is the compact set constructed in Proposition \ref{curvunr}, such that
\begin{itemize}
    \item[(i)]$D\gamma_e=e$ for $\mathcal{H}^1$-almost every $x\in K$,
    \item[(ii)] for any $\sigma\in(0,1)$ and any full $C(e,\sigma)$-curve $\eta$ we have $\mathcal{H}^1(\mathrm{Im}(\gamma_e)\cap\mathrm{Im}(\eta))=0$.
\end{itemize}
\end{teorema}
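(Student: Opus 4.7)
The plan is to transport the curve $\gamma$ from \cref{curvunr} to the desired direction by means of a homogeneous group automorphism provided by \cref{pautomorf}. Given a unit vector $e\in V_1$, I would first choose a unit vector $e^\perp\in V_1$ orthogonal to $e$ with respect to $\langle\cdot,\cdot\rangle$, and then invoke \cref{pautomorf} with $Y_1=e^\perp$, $Y_2=e$ to obtain a homogeneous group automorphism $\Psi_e\colon\mathbb F_{2,3}\to\mathbb F_{2,3}$ whose restriction to $V_1$ is the orthogonal transformation sending $X_1\mapsto e^\perp$ and $X_2\mapsto e$. The candidate curve is then $\gamma_e:=\Psi_e\circ\gamma\colon K\to\mathbb F_{2,3}$. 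The biLipschitz property of $\gamma_e$ is immediate from the biLipschitz nature of homogeneous group automorphisms (which preserve any left-invariant homogeneous distance up to a multiplicative constant) combined with that of $\gamma$.

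For item (i), I would first prove that $D\gamma(t)=X_2$ for $\mathcal H^1$-a.e.\ $t\in K$. The key input is the cubic estimate $|\gamma_4(t)-\gamma_4(s)|\leq C|t-s|^3$ obtained by passing to the limit $k\to\infty$ in the inequality \eqref{P} from the proof of \cref{curvunr}. This yields that the Euclidean derivative of $\gamma$ along $K$ at a density point reads $\gamma'(t)=(0,1,0,0,0)$. Since in exponential coordinates of the second type one has $X_1=\partial_1$ and $X_2(\gamma(t))=(0,1,0,0,0)$ along the points $\gamma(t)=(0,t,0,\gamma_4(t),0)$ (using that the first coordinate is $0$), the canonical coordinates of $\gamma'$ are $h(t)=(0,1)$, and \cref{lemma.monti2} gives $D\gamma(t)=X_2$ a.e. Finally, since $\Psi_e$ is a homogeneous group homomorphism, the Pansu differential satisfies the chain rule $D(\Psi_e\circ\gamma)(t)=\Psi_e(D\gamma(t))$, hence $D\gamma_e(t)=\Psi_e(X_2)=e$ for $\mathcal H^1$-a.e.\ $t\in K$.

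For item (ii), given a full $C(e,\sigma)$-curve $\eta$, the plan is to pull it back through $\Psi_e^{-1}$ so as to reduce to the base case $e=X_2$ already handled in \cref{unrunr}. By the same chain rule, $D(\Psi_e^{-1}\circ\eta)(t)=\Psi_e^{-1}(D\eta(t))\in \Psi_e^{-1}(C(e,\sigma))$; and since $\Psi_e^{-1}|_{V_1}$ is an orthogonal transformation sending $e$ to $X_2$, it carries $C(e,\sigma)$ isometrically onto $C(X_2,\sigma)$. Hence $\tilde\eta:=\Psi_e^{-1}\circ\eta$ is a full $C(X_2,\sigma)$-curve, and \cref{unrunr} yields $\mathrm{Card}(\mathrm{Im}(\gamma)\cap\mathrm{Im}(\tilde\eta))\leq 1$. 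Applying the bijection $\Psi_e$ one concludes that $\mathrm{Im}(\gamma_e)\cap\mathrm{Im}(\eta)$ has at most one point and therefore $\mathcal H^1$-measure zero.

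The main obstacle is the verification that $D\gamma=X_2$ almost everywhere on $K$: this is where one must exploit the fine-scale behaviour encoded in the cubic estimate for $\gamma_4$, combined with the explicit form of $X_1, X_2$ along $\gamma$. The transport argument itself and the preservation of the cone condition under pullback are clean once one observes that the flexibility in \cref{pautomorf} allows one to choose $Y_1=e^\perp$, making the differential of $\Psi_e$ orthogonal on $V_1$ and thus keeping the opening $\sigma$ unchanged.
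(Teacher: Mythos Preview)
Your proposal is correct and follows essentially the same route as the paper: transport $\gamma$ by the automorphism $\Psi_e$ from \cref{pautomorf} with $Y_1=e^\perp$, $Y_2=e$, then pull back $\eta$ through $\Psi_e^{-1}$ and invoke \cref{unrunr}. Your version is in fact slightly tidier in two places: you spell out why $D\gamma=X_2$ a.e.\ (which the paper leaves implicit), and you exploit that $\Psi_e|_{V_1}$ is orthogonal to get $\Psi_e^{-1}(C(e,\sigma))=C(X_2,\sigma)$ with the \emph{same} opening, whereas the paper only uses the biLipschitz property of $\Psi$ to land in some $C(X_2,\sigma')$.
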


\begin{proof}
    Let $e^\perp$ be the orthogonal unit vector of $e$, and let $L$ to be the orthogonal transformation that sends the basis $\{X_1,X_2\}$ to $\{e^\perp,e\}$. Let 
$\Psi$ be the automorphism associated to $L$ yielded by Proposition \ref{pautomorf}.
We claim that the curve $\gamma_e:=\Psi(\gamma)$ satisfies items (i) and (ii). Item (i) follows from the following computation
\begin{equation}
\begin{split}
       D\gamma_e(t)=&\lim_{h\to 0,\,t+h\in K}\delta_{1/h}(\gamma_e(t)^{-1}\gamma_e(t+h))=\lim_{h\to 0,\,t+h\in K}\Psi(\delta_{1/h}(\gamma(t)^{-1}\gamma(t+h)))=\Psi(D\gamma(t))=\Psi(X_2)=e.
       \nonumber
\end{split}
\end{equation}
Now let $\eta$ be a full $C(e,\sigma)$-curve and note that
\begin{equation}
    \mathcal{H}^1(\mathrm{Im}(\gamma_e)\cap\mathrm{Im}(\eta))=\mathcal{H}^1(\Psi(\mathrm{Im}(\gamma)\cap\mathrm{Im}(\Psi^{-1}\circ\eta)))\leq\mathrm{Lip}(\Psi)\mathcal{H}^1(\mathrm{Im}(\gamma)\cap\mathrm{Im}(\Psi^{-1}\circ\eta)),
    \nonumber
\end{equation}
where the last inequality comes from the fact that every group homomorphism is a Lipschitz map.
If we prove that the curve $\Psi^{-1}\circ \eta$ is a $C(X_2,\sigma')$-curve, for some $\sigma'$, we have proved the proposition. First, the fact that $\Psi$, being a homogeneous automorphism, is biLipschitz implies that $\Psi^{-1}\circ \eta$ is a Lipschitz full curve. Moreover, let us compute
\begin{equation}
\begin{split}
       \langle D\Psi^{-1}\eta(t),X_2\rangle&=\Big\langle\lim_{h\to 0,\,t+h\in K}\delta_{1/h}(\Psi^{-1}(\eta(t)^{-1}\eta(t+h))), X_2\Big\rangle\\
       &=\Big\langle\lim_{h\to 0,\,t+h\in K}\Psi^{-1}(\delta_{1/h}(\eta(t)^{-1}\eta(t+h))), X_2\Big\rangle=\big\langle\Psi^{-1}(D\eta(t)), X_2\big\rangle=\langle D\eta(t),e\rangle> (1-\sigma^2)\lvert D\eta(t)\rvert\\
       &\geq (1-\sigma'^2)|D\Psi^{-1}\eta(t)|,
       \nonumber
\end{split}
\end{equation}
for some $\sigma'$, where in the last inequality we are using that $\Psi$ is biLipschitz. Hence we have proved 
the claim,
and we can finally conclude
$$
\mathcal{H}^1(\mathrm{Im}(\gamma_e)\cap\mathrm{Im}(\eta))\leq \mathrm{Lip}(\Psi^{-1})\mathcal{H}^1(\mathrm{Im}(\gamma)\cap\mathrm{Im}(\Psi^{-1}\circ\eta))=0,
$$
where the last identity comes from Proposition \ref{unrunr}.
\end{proof}

\begin{osservazione}\label{rem:Engel}
Let us recall that the Engel group $\mathbb E$ is the Carnot group whose Lie algebra is 
$$
\mathfrak e:=\mathrm{span}\{X_1,X_2,X_3,X_4\},
$$
where the only nonvanishing bracket relations are $[X_1,X_2]=X_3$, $[X_1,X_3]=X_4$.
Taking into account the results in \cite[Example 3.31, Remark 3.32]{BLD1} and by using exponential coordinates of the second type, we have that in the Engel group $\mathbb E$, \cref{propconoXcurv} holds with the cone 
$$
\{z\in \mathbb E:x_2\geq 0,x_4\geq 0,2x_2x_4-x_3^2\geq 0\}.
$$
Hence, if we take $\gamma_4:K\to\mathbb R$ constructed as in \cref{curvunr}, changing $\varepsilon_3$ accordingly in order to have a norm like the one in \cref{smoothnorm} in the Engel group, we have that the curve 
$$
\widetilde\gamma(t):=(0,t,0,\gamma_4(t)),
$$
still satisfies \cref{unrunr}. All in all, one gets the same result as in \cref{prop.intersez.curves} in the Engel group $\mathbb E$ and in the direction $X_2$.

Notice that in the Engel group $\mathbb E$ we cannot adapt the strategy in \cref{thm:EveryDirection} in order to provide an example where the horizontal derivative is an arbitrary element in $V_1$. This is due to the fact that if $\Psi$ is a group automorphism that sends $\{X_1,X_2\}$ to another basis $\{Y_1,Y_2\}$ of $V_1$, we have that $Y_2=bX_2$ for some $b\neq 0$. This is indeed linked to the fact the unique horizontal abnormal direction in Engel is $X_2$, see \cite[Proposition 6.4]{DLDMV19}.


\end{osservazione}

\printbibliography

@phdthesis{tesimonti,
AUTHOR={Monti, Roberto},
TITLE={Distances, boundaries and surface measures in Carnot-Carath\'eodory spaces},
SCHOOL={Universit\'a degli studi di Trento},
YEAR={2001},
}

@misc{Foliated20,
    title={Foliated corona decompositions},
    author={Assaf Naor and Robert Young},
    year={2020},
    howpublished={Preprint on arXiv, arXiv:2004.12522.},
}

@misc{bellettini2019sets,
    title={Sets with constant normal in Carnot groups: properties and examples},
    author={Costante Bellettini and Enrico Le Donne},
    year={2019},
    howpublished={Preprint on arXiv, arXiv:1910.12117. Accepted for publication in "Commentarii Mathematici Helvetici"},
}

@article {BLD1,
    AUTHOR = {Bellettini, Costante and Le Donne, Enrico},
     TITLE = {Regularity of sets with constant horizontal normal in the
              {E}ngel group},
   JOURNAL = {Comm. Anal. Geom.},
  FJOURNAL = {Communications in Analysis and Geometry},
    VOLUME = {21},
      YEAR = {2013},
    NUMBER = {3},
     PAGES = {469--507},
      ISSN = {1019-8385},
   MRCLASS = {53C17 (22Exx 49Q05 53C40)},
  MRNUMBER = {3078944},
MRREVIEWER = {Davide Vittone},
       DOI = {10.4310/CAG.2013.v21.n3.a1},
       URL = {https://doi.org/10.4310/CAG.2013.v21.n3.a1},
}

@misc{antonelli2020rectifiable,
      title={On rectifiable measures in Carnot groups: structure theory}, 
      author={Gioacchino Antonelli and Andrea Merlo},
      year={2020},
      eprint={2009.13941},
      archivePrefix={arXiv},
      primaryClass={math.MG}
}

@article {kircharea,
    AUTHOR = {Kirchheim, Bernd},
     TITLE = {Rectifiable metric spaces: local structure and regularity of
              the {H}ausdorff measure},
   JOURNAL = {Proc. Amer. Math. Soc.},
  FJOURNAL = {Proceedings of the American Mathematical Society},
    VOLUME = {121},
      YEAR = {1994},
    NUMBER = {1},
     PAGES = {113--123},
      ISSN = {0002-9939},
   MRCLASS = {28A78},
  MRNUMBER = {1189747},
MRREVIEWER = {G. Freilich},
       DOI = {10.2307/2160371},
       URL = {https://doi.org/10.2307/2160371},
}

@ARTICLE{AntonelliMerlo2021,
       author = {{Antonelli}, Gioacchino and {Merlo}, Andrea},
        title = "{On rectifiable measures in Carnot groups: representation}",
      journal = {arXiv e-prints},
     keywords = {Mathematics - Metric Geometry, Mathematics - Differential Geometry},
         year = 2021,
        month = apr,
          eid = {arXiv:2104.00335},
        pages = {arXiv:2104.00335},
archivePrefix = {arXiv},
       eprint = {2104.00335},
 primaryClass = {math.MG},
       adsurl = {https://ui.adsabs.harvard.edu/abs/2021arXiv210400335A},
      adsnote = {Provided by the SAO/NASA Astrophysics Data System}
}

@misc{AM20,
author = {Antonelli, Gioacchino and Merlo, Andrea},
title={Intrinsically Lipschitz functions with normal targets in Carnot groups},
year={2020},
howpublished = {Preprint on arXiv, 	arXiv:2006.02782. Accepted for publication in "Annales Academiæ Scientiarum Fennicæ".}
}

@misc{DLDMV19,
author = {Don, Sebastiano and Le Donne, Enrico and Moisala, Terhi and Vittone, Davide},
title={A rectifiability result for finite-perimeter sets in Carnot groups},
year={2019},
howpublished = {Preprint on arXiv, 	arXiv:1912.00493. Accepted for publication in "Indiana University Mathematics Journal".},
}

@book{Federer1996GeometricTheory,
    AUTHOR = {Federer, Herbert},
     TITLE = {Geometric measure theory},
    SERIES = {Die Grundlehren der mathematischen Wissenschaften, Band 153},
 PUBLISHER = {Springer-Verlag New York Inc., New York},
      YEAR = {1969},
     PAGES = {xiv+676},
   MRCLASS = {28.80 (26.00)},
  MRNUMBER = {0257325},
MRREVIEWER = {J. E. Brothers},
}

@misc{Vittone20, 
    AUTHOR = {Vittone, Davide},
     TITLE = {Lipschitz graphs and currents in Heisenberg groups},
howpublished = {Preprint on arXiv, 	arXiv:2007.14286}
}

@article{Serapioni2001RectifiabilityGroup,
  AUTHOR = {Franchi, Bruno and Serapioni, Raul and Serra Cassano,
              Francesco},
     TITLE = {Rectifiability and perimeter in the {H}eisenberg group},
   JOURNAL = {Math. Ann.},
  FJOURNAL = {Mathematische Annalen},
    VOLUME = {321},
      YEAR = {2001},
    NUMBER = {3},
     PAGES = {479--531},
      ISSN = {0025-5831},
   MRCLASS = {49Q15 (22E25 46E35)},
  MRNUMBER = {1871966},
MRREVIEWER = {Piotr Haj\l asz},
       DOI = {10.1007/s002080100228},
       URL = {https://doi.org/10.1007/s002080100228},
}

@article {MagnaniUnrect,
    AUTHOR = {Magnani, Valentino},
     TITLE = {Unrectifiability and rigidity in stratified groups},
   JOURNAL = {Arch. Math. (Basel)},
  FJOURNAL = {Archiv der Mathematik},
    VOLUME = {83},
      YEAR = {2004},
    NUMBER = {6},
     PAGES = {568--576},
      ISSN = {0003-889X},
   MRCLASS = {53C17 (28A75 53C24)},
  MRNUMBER = {2105335},
MRREVIEWER = {Gerald B. Folland},
       DOI = {10.1007/s00013-004-1057-4},
       URL = {https://doi.org/10.1007/s00013-004-1057-4},
}

@article {FranchiSerapioni16,
    AUTHOR = {Franchi, Bruno and Serapioni, Raul Paolo},
     TITLE = {Intrinsic {L}ipschitz graphs within {C}arnot groups},
   JOURNAL = {J. Geom. Anal.},
  FJOURNAL = {Journal of Geometric Analysis},
    VOLUME = {26},
      YEAR = {2016},
    NUMBER = {3},
     PAGES = {1946--1994},
      ISSN = {1050-6926},
   MRCLASS = {49Q15 (22E25 53C17 58C20 58C35)},
  MRNUMBER = {3511465},
MRREVIEWER = {Jingzhi Tie},
       DOI = {10.1007/s12220-015-9615-5},
       URL = {https://doi.org/10.1007/s12220-015-9615-5},
}

@article {step2,
    AUTHOR = {Franchi, Bruno and Serapioni, Raul and Serra Cassano,
              Francesco},
     TITLE = {On the structure of finite perimeter sets in step 2 {C}arnot
              groups},
   JOURNAL = {J. Geom. Anal.},
  FJOURNAL = {The Journal of Geometric Analysis},
    VOLUME = {13},
      YEAR = {2003},
    NUMBER = {3},
     PAGES = {421--466},
      ISSN = {1050-6926},
   MRCLASS = {49Q15 (53C17)},
  MRNUMBER = {1984849},
MRREVIEWER = {J. E. Brothers},
       DOI = {10.1007/BF02922053},
       URL = {https://doi.org/10.1007/BF02922053},
}

@article {BaloghFassler09,
    AUTHOR = {Balogh, Zolt\'{a}n M. and F\"{a}ssler, Katrin S.},
     TITLE = {Rectifiability and {L}ipschitz extensions into the
              {H}eisenberg group},
   JOURNAL = {Math. Z.},
  FJOURNAL = {Mathematische Zeitschrift},
    VOLUME = {263},
      YEAR = {2009},
    NUMBER = {3},
     PAGES = {673--683},
      ISSN = {0025-5874},
   MRCLASS = {53C17 (22E30 49Q20 53C23)},
  MRNUMBER = {2545863},
MRREVIEWER = {Jeremy T. Tyson},
       DOI = {10.1007/s00209-008-0437-z},
       URL = {https://doi.org/10.1007/s00209-008-0437-z},
}

@article {WengerYoung10,
    AUTHOR = {Wenger, Stefan and Young, Robert},
     TITLE = {Lipschitz extensions into jet space {C}arnot groups},
   JOURNAL = {Math. Res. Lett.},
  FJOURNAL = {Mathematical Research Letters},
    VOLUME = {17},
      YEAR = {2010},
    NUMBER = {6},
     PAGES = {1137--1149},
      ISSN = {1073-2780},
   MRCLASS = {53C17 (58A20)},
  MRNUMBER = {2729637},
MRREVIEWER = {Davide Vittone},
       DOI = {10.4310/MRL.2010.v17.n6.a12},
       URL = {https://doi.org/10.4310/MRL.2010.v17.n6.a12},
}

@misc{KatrinDaniela,
    title={Extensions and corona decompositions of low-dimensional intrinsic Lipschitz graphs in Heisenberg groups.},
    author={Daniela di Donato and Katrin Fässler},
    year={2021},
    howpublished={Preprint on arXiv, arXiv:2012.12609. Accepted for publication in "Annali di Matematica Pura ed Applicata".}
}

@article {Magnani10,
    AUTHOR = {Magnani, Valentino},
     TITLE = {Contact equations, {L}ipschitz extensions and isoperimetric
              inequalities},
   JOURNAL = {Calc. Var. Partial Differential Equations},
  FJOURNAL = {Calculus of Variations and Partial Differential Equations},
    VOLUME = {39},
      YEAR = {2010},
    NUMBER = {1-2},
     PAGES = {233--271},
      ISSN = {0944-2669},
   MRCLASS = {49Q20},
  MRNUMBER = {2659687},
MRREVIEWER = {Ana Hurtado},
       DOI = {10.1007/s00526-010-0309-3},
       URL = {https://doi.org/10.1007/s00526-010-0309-3},
}

@article {FSSC11,
    AUTHOR = {Franchi, Bruno and Serapioni, Raul and Serra Cassano,
              Francesco},
     TITLE = {Differentiability of intrinsic {L}ipschitz functions within
              {H}eisenberg groups},
   JOURNAL = {J. Geom. Anal.},
  FJOURNAL = {Journal of Geometric Analysis},
    VOLUME = {21},
      YEAR = {2011},
    NUMBER = {4},
     PAGES = {1044--1084},
      ISSN = {1050-6926},
   MRCLASS = {22E30 (58C20)},
  MRNUMBER = {2836591},
MRREVIEWER = {Davide Vittone},
       DOI = {10.1007/s12220-010-9178-4},
       URL = {https://doi.org/10.1007/s12220-010-9178-4},
}

@article {Pansu,
    AUTHOR = {Pansu, Pierre},
     TITLE = {M\'{e}triques de {C}arnot-{C}arath\'{e}odory et quasiisom\'{e}tries des
              espaces sym\'{e}triques de rang un},
   JOURNAL = {Ann. of Math. (2)},
  FJOURNAL = {Annals of Mathematics. Second Series},
    VOLUME = {129},
      YEAR = {1989},
    NUMBER = {1},
     PAGES = {1--60},
      ISSN = {0003-486X},
   MRCLASS = {53C20 (22E40)},
  MRNUMBER = {979599},
MRREVIEWER = {Gudlaugur Thorbergsson},
       DOI = {10.2307/1971484},
       URL = {https://doi.org/10.2307/1971484},
}

@article {FSSC06,
    AUTHOR = {Franchi, Bruno and Serapioni, Raul and Serra Cassano,
              Francesco},
     TITLE = {Intrinsic {L}ipschitz graphs in {H}eisenberg groups},
   JOURNAL = {J. Nonlinear Convex Anal.},
  FJOURNAL = {Journal of Nonlinear and Convex Analysis. An International
              Journal},
    VOLUME = {7},
      YEAR = {2006},
    NUMBER = {3},
     PAGES = {423--441},
      ISSN = {1345-4773},
   MRCLASS = {58C20 (22E30)},
  MRNUMBER = {2287539},
MRREVIEWER = {Thierry Coulhon},
}

@article {LD17,
    AUTHOR = {Le Donne, Enrico},
     TITLE = {A primer on {C}arnot groups: homogenous groups,
              {C}arnot-{C}arath\'{e}odory spaces, and regularity of their
              isometries},
   JOURNAL = {Anal. Geom. Metr. Spaces},
  FJOURNAL = {Analysis and Geometry in Metric Spaces},
    VOLUME = {5},
      YEAR = {2017},
    NUMBER = {1},
     PAGES = {116--137},
   MRCLASS = {53C17 (22E25 22F30 43A80)},
  MRNUMBER = {3742567},
MRREVIEWER = {Andrea Pinamonti},
       DOI = {10.1515/agms-2017-0007},
       URL = {https://doi.org/10.1515/agms-2017-0007},
}

@article {AK00,
    AUTHOR = {Ambrosio, Luigi and Kirchheim, Bernd},
     TITLE = {Rectifiable sets in metric and {B}anach spaces},
   JOURNAL = {Math. Ann.},
  FJOURNAL = {Mathematische Annalen},
    VOLUME = {318},
      YEAR = {2000},
    NUMBER = {3},
     PAGES = {527--555},
      ISSN = {0025-5831},
   MRCLASS = {28A75 (46G99 46T99 49Q20)},
  MRNUMBER = {1800768},
MRREVIEWER = {Piotr Haj\l asz},
       DOI = {10.1007/s002080000122},
       URL = {https://doi.org/10.1007/s002080000122},
}

@article {ChousionisFasslerOrponen19,
    AUTHOR = {Chousionis, Vasileios and F\"{a}ssler, Katrin and Orponen, Tuomas},
     TITLE = {Intrinsic {L}ipschitz graphs and vertical {$\beta$}-numbers in
              the {H}eisenberg group},
   JOURNAL = {Amer. J. Math.},
  FJOURNAL = {American Journal of Mathematics},
    VOLUME = {141},
      YEAR = {2019},
    NUMBER = {4},
     PAGES = {1087--1147},
      ISSN = {0002-9327},
   MRCLASS = {49Q15 (28A75)},
  MRNUMBER = {3992573},
       DOI = {10.1353/ajm.2019.0028},
       URL = {https://doi.org/10.1353/ajm.2019.0028},
}

@article {MR852474,
    AUTHOR = {Johnson, William B. and Lindenstrauss, Joram and Schechtman,
              Gideon},
     TITLE = {Extensions of {L}ipschitz maps into {B}anach spaces},
   JOURNAL = {Israel J. Math.},
  FJOURNAL = {Israel Journal of Mathematics},
    VOLUME = {54},
      YEAR = {1986},
    NUMBER = {2},
     PAGES = {129--138},
      ISSN = {0021-2172},
   MRCLASS = {54C20 (46B99 54H99)},
  MRNUMBER = {852474},
MRREVIEWER = {Gilles Pisier},
       DOI = {10.1007/BF02764938},
       URL = {https://doi.org/10.1007/BF02764938},
}

\end{document}